\numberwithin{equation}{section}
\newtheorem{theorem}{Theorem}[section]
\newtheorem{lemma}[theorem]{Lemma}
\newtheorem{proposition}[theorem]{Proposition}
\newtheorem{proposition/definition}[theorem]{Proposition/Definition}
\newtheorem{corollary}[theorem]{Corollary}
\newtheorem{definition}[theorem]{Definition}
\newtheorem{remark}[theorem]{Remark}
\newtheorem{example}[theorem]{Example}
\newtheorem*{notat*}{Notation}
\newtheorem{conj}[theorem]{Conjecture}
\renewcommand{\H}{\mathcal{H}}
\newcommand{\N}{\mathbb{N}} 
\newcommand{\Z}{\mathbb{Z}}
\renewcommand{\O}{\mathcal{O}}
\renewcommand{\r}{\rightarrow}
\newcommand{\Zar}{\mathrm{Zar}}
\newcommand{\Nis}{\mathrm{Nis}}
\newcommand{\et}{\mathrm{\acute{e}t}}
\newcommand{\Spec}{\mathrm{Spec}}
\newcommand{\CH}{\mathrm{CH}}
\renewcommand{\log}{\mathrm{log}}
\title{Algebraization for zero-cycles and the $p$-adic cycle class map}
\author{Morten L\"uders}
\address{Fakult\"at f\"ur Mathematik, Universit\"at Regensburg, 93040 Regensburg, Germany}
\email{mortenlueders@yahoo.de}
\begin{document}
\begin{abstract}
Using an idelic argument and assuming the Gersten conjecture for Milnor K-theory, we show that the restriction map from the Chow group of one-cycles on a smooth projective scheme over a henselian local ring to a pro-system of thickened zero-cycles is surjective. We relate this restriction map to the $p$-adic cycle class map.
\end{abstract}

\thanks{The author is supported by the DFG through CRC 1085 \textit{Higher Invariants} (Universit\"at Regensburg).}
\maketitle

\section{Introduction}
Let $A$ be an excellent henselian discrete valuation ring with uniformising parameter $\pi$ and residue field $k$. Let $X$ be a smooth projective scheme over Spec$(A)$ of relative dimension $d$. Let $X_n:=X\times_A A/(\pi^n)$, i.e. $X_1$ is the special fiber and the $X_n$ are the respective thickenings of $X_1$. 

For $n$ invertible in $k$ and $\Lambda=\Z/n\Z$ the following commutative diagram has been studied extensively:
\begin{equation}\label{diagrammintroduction}
\begin{xy} 
  \xymatrix{
  \CH_1(X)/n \ar[r]^{\rho} \ar[d]_{cl_X} & \CH_0(X_1)/n \ar[d]_{cl_{X_s}} \\
 H^{2d}_{\text{\'et}}(X,\Lambda(d))  \ar[r]^-{\cong} & H^{2d}_{\text{\'et}}(X_1,\Lambda(d))   
  }
\end{xy} 
\end{equation}
The lower horizontal map is an isomorphism by proper base change. The map $cl_{X_s}$ is an isomorphism assuming that $k$ is finite or separably closed by unramified class field theory (see \cite[Thm. 5, Rem. 3]{CSS83} and \cite{KaS83}). In \cite{SS10}, Saito and Sato show that $cl_X$ is an isomorphism if $k$ is finite or separably closed which implies that $\rho$ is an isomorphism under these conditions. That $\rho$ is in fact an isomorphism for arbitrary perfect residue fields is shown in \cite{KEW16} without using \'etale realizations by making use of a method introduced by Bloch in \cite[App.]{EWB16}. In \cite{Lu17} these results are generalized to zero-cycles with coefficients in  Milnor K-theory.

Let $\mathcal{K}^M_{X,d}$ be the improved Milnor K-sheaf defined in \cite{Ke10} and $\mathcal{K}^M_{d,X_n}$ its restriction to $X_n$. In this article we study the restriction map
$$\begin{xy}
  \xymatrix{
       res_{X_n}: \CH^{d+j}(X,j) \ar[r]^-{\cong}   &  
    H^d(X,\mathcal{K}^M_{d+j,X}) \ar[r]^-{res_{X_n}} &  H^d(X_1,\mathcal{K}^M_{d+j,X_n}).
  }
\end{xy} $$
If $j=0$, we assume the Gersten conjecture for the Milnor K-sheaf $\mathcal{K}^M_{n,X}$ (see Definition \ref{Gcdef}) for the isomorphism on the left. By  \cite{Ke09} and \cite{Ke10} it holds if $X$ is equi-characteristic. If $j>0$, we additionally assume the Gersten conjecture for the sheaf $\mathcal{CH}^r(-q)$ associated to the presheaf $U\mapsto \CH^r(U,-q)$ for the isomorphism on the left (see \cite{Lu17}). This holds with finite coefficients or again if $X$ is equi-characteristic (see e.g. \cite{Lu17}). For our applications we will need the following additional result which is well-known to the expert and easily follows from what is known about the Gersten conjecture for Quillen K-theory with finite coefficients (see Section \ref{sectionGCMK}):
\begin{proposition}
Let $\mathrm{ch}(k)=p>(n-1)$. Then the Gersten conjecture holds for the sheaf $\mathcal{K}^M_{n,X}/p^r$ for all $r\geq 1$. 
\end{proposition}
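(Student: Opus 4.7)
The plan is to deduce the statement from the Gersten conjecture for Quillen $K$-theory with mod-$p^r$ coefficients, by means of the Suslin-Kerz $(n-1)!$-retraction.

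First, for a local ring $R=\O_{X,x}^h$ of $X$, the natural comparison map $i_n\colon K^M_n(R)\to K_n(R)$ from improved Milnor $K$-theory to Quillen $K$-theory admits, by the theorem of Suslin (for fields) and its extension by Kerz \cite{Ke10} to local rings using the improved definition, a natural map $\phi_n\colon K_n(R)\to K^M_n(R)$ satisfying $\phi_n\circ i_n = (n-1)!\cdot\id$. Under the hypothesis $p>n-1$, the integer $(n-1)!$ is a unit in $\Z/p^r$, so after reduction modulo $p^r$ the map $i_n$ becomes split injective. Consequently $\mathcal{K}^M_{n,X}/p^r$ is realised as a direct Nisnevich summand of $\mathcal{K}_{n,X}/p^r$ on $X$, and the same direct sum decomposition holds at every residue field $k(x)$.

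Second, I would invoke the Gersten conjecture for Quillen $K$-theory with $\Z/p^r$-coefficients on smooth schemes over an excellent henselian DVR, which is known by the work of Gillet-Levine, Panin, and Geisser. This yields exactness of the Gersten complex for $\mathcal{K}_{n,X}/p^r$.

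Third, since the retraction $\phi_n$ is functorial in $R$ and commutes with the tame-symbol boundary maps of the Gersten complex, the Gersten complex of $\mathcal{K}^M_{n,X}/p^r$ embeds as a direct summand of the Gersten complex of $\mathcal{K}_{n,X}/p^r$. Because exactness is inherited by summands, this yields the desired Gersten exactness for $\mathcal{K}^M_{n,X}/p^r$, proving the proposition.

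The main obstacle is the verification of the tame-symbol compatibility of $\phi_n$: one must check that $\phi_{n-1}$ intertwines the Quillen boundary with the Milnor tame symbol. This follows from the functoriality of the Suslin-Kerz construction with respect to flat maps and residue inclusions; granting it, the proposition becomes a direct passage from Quillen to Milnor via the $(n-1)!$-retraction.
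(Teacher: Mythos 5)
Your overall strategy (exploit the maps $K^M_n\to K^Q_n\to K^M_n$ whose composite is multiplication by $(n-1)!$, together with the known Gersten exactness for Quillen K-theory with finite coefficients) is the same as the paper's, but your execution contains a genuine gap that the paper's argument is specifically arranged to avoid. You want to realise the whole Milnor Gersten complex mod $p^r$ as a retract (direct summand) of the Quillen one, and for that you need the retraction $\phi_\bullet\colon K^Q_\bullet\to K^M_\bullet$ to commute with the boundary maps, i.e.\ $\phi_{n-1}\circ\partial^Q=\partial^M\circ\phi_n$. You name this as the ``main obstacle'' and then dismiss it as following from ``functoriality with respect to flat maps and residue inclusions'' --- but residue maps are not ring homomorphisms, so no functoriality of the Suslin--Nesterenko/Kerz construction applies to them; the map $K_n\to K^M_n$ is built from stable homology of $GL$ and its compatibility with localization boundary maps is a nontrivial verification for which you cite nothing and which is not a standard documented fact. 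As written, the key step of your proof is unproved. (There are also smaller imprecisions: the literature input is Gersten exactness for K-theory \emph{with} $\Z/p^r$-coefficients, $K^Q_*(-,\Z/p^r)$, not for the sheaf $K_n/p^r$, so the ``direct summand of $\mathcal{K}_{n,X}/p^r$'' formulation needs care with universal-coefficient terms.)

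The paper's proof needs much less. It sandwiches the Milnor row between two copies via $K^M_n/p^r\to K^Q_n(-,\Z/p^r)\to K^M_n/p^r$ and uses only: (i) commutativity of the squares involving the \emph{forward} map $K^M\to K^Q$ with the residue maps (this is standard, cf.\ Weibel); (ii) compatibility of the \emph{backward} map with the single ring homomorphism $B\to F$ (genuine functoriality, no residues involved); (iii) injectivity of $K^Q_n(B,\Z/p^r)\to K^Q_n(F,\Z/p^r)$ and exactness of the Quillen row at $K^Q_n(F,\Z/p^r)$, from \cite[Thm.~8.2]{GL00}. These alone give injectivity of $i_n$ and exactness at $K^M_n(F)/p^r$ after dividing by the unit $(n-1)!$; exactness at the remaining spots of the Milnor complex involves only residue fields and is quoted from \cite[Cor.~4.3]{Ge04} rather than deduced from a splitting. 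If you restructure your argument this way --- using the retraction only over $B$ and $F$, never across a residue map, and importing the field-level exactness from Geisser --- your proof closes the gap and becomes essentially the paper's.
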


The main theorem of this article is the following:
\begin{theorem}\label{maintheoremalgebraization}
The restriction map $res:H^{d}(X,\mathcal{K}^M_{d+j,X})\xrightarrow{} H^{d}(X_1,\mathcal{K}^M_{d+j,X_n})$ is surjective. In particular the map of pro-systems 
$$res: H^{d}(X,\mathcal{K}^M_{d+j,X}) \r "\mathrm{lim}_n" H^{d}(X_1,\mathcal{K}^M_{d+j,X_n})$$
is an epimorphism in the category of pro-abelian groups $\text{pro-}\text{Ab}$ for all $j\geq 0$. Here we consider $H^{d}(X,\mathcal{K}^M_{d+j,X})$ as a constant pro-system.
\end{theorem}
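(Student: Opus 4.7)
My plan is to reduce surjectivity to an explicit lifting problem via the Gersten-type resolutions on $X$ and on $X_n$ that are hypothesized in the paper, and then to lift representatives by algebraizing closed points to horizontal curves. The Gersten conjecture on $X$ presents
$$H^d(X,\mathcal{K}^M_{d+j,X})=\mathrm{coker}\Big(\bigoplus_{y\in X^{(d-1)}}K^M_{j+1}(k(y))\xrightarrow{\partial}Z^X_d\Big),$$
where $Z^X_d\subseteq\bigoplus_{x\in X^{(d)}}K^M_{j}(k(x))$ is the kernel of the next Gersten differential (vacuous when $j=0$); the hypothesis on $\mathcal{K}^M_{d+j,X_n}$ yields an analogous presentation of $H^d(X_1,\mathcal{K}^M_{d+j,X_n})$, indexed by points of $X_1=X_n^{\mathrm{red}}$ of codimension $d-1$ and $d$, with coefficients in the corresponding thickened Milnor $K$-groups. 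The restriction map $res$ is induced levelwise by the natural specialization maps between these groups.

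Since $X$ is proper over the henselian base $\mathrm{Spec}(A)$, the closed points of $X$ coincide with those of $X_1$; at each such $x$ the local ring $\O_{X,x}$ is regular of dimension $d+1$ with $\pi$ part of a regular system of parameters (by smoothness of $X_1$). Choosing a regular system $(\pi,t_1,\dots,t_d)$ and cutting out $t_2=\cdots=t_d=0$ produces a horizontal regular curve germ through $x$, which by projectivity of $X/A$ (together with a Bertini-style argument over a finite set of closed points) extends to a horizontal integral curve $C\subset X$ passing through $x$. Units of $\O_{X,x}/\pi^n$ lift to units of $\O_{X,x}$ by henselianness, and since Milnor $K$-theory is generated by symbols of units, the reduction $K^M_{\ast}(\O_{X,x})\twoheadrightarrow K^M_{\ast}(\O_{X,x}/\pi^n)$ is surjective. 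This lets me lift the Milnor $K$-datum at each closed point in the support of a given class on $X_n$ to a datum on the function field $k(C)$ of a horizontal curve; this datum is a codim $d-1$ Gersten term on $X$ whose tame symbol at $x$ reproduces the prescribed local datum.

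Aggregating these lifts and using the kernel-of-boundary condition on the $X_n$-side to match global boundaries yields a preimage in $H^d(X,\mathcal{K}^M_{d+j,X})$, proving surjectivity. The pro-system statement is then formal: a morphism from a constant pro-object to a pro-system of abelian groups is an epimorphism in $\text{pro-}\text{Ab}$ whenever it is levelwise surjective. I expect the main obstacle to be the algebraization step — ensuring that the horizontal curves and lifted symbols can be chosen compatibly so that the resulting global idelic chain on $X$ has tame-symbol boundaries exactly matching the prescribed data on $X_n$. This is the idelic bookkeeping alluded to in the abstract, in the spirit of \cite{KEW16} and \cite{Lu17}, and uses henselianness of $A$ together with smoothness of $X_1$ in an essential way.
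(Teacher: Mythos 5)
Your reduction misdescribes the target group, and this is a genuine gap rather than a presentational issue. The thickening $X_n$ is non-reduced, hence non-regular, and no Gersten-type resolution of $\mathcal{K}^M_{d+j,X_n}$ is available (nor assumed anywhere in the paper); the coniveau spectral sequence for this sheaf on $X_1$ has a priori non-vanishing terms $E_1^{p,q}$ with $q>0$ and no purity isomorphisms, so $H^{d}(X_1,\mathcal{K}^M_{d+j,X_n})$ is \emph{not} presented as a cokernel indexed by points of $X_1$ with ``thickened Milnor $K$-groups'' as coefficients, and a class there is not determined by data at closed points in the way your lifting scheme requires. The paper sidesteps this entirely: it never resolves anything on $X_n$, but works on $X$ with the relative sheaf $\mathcal{K}^M_{d+j,X\mid X_n}=\ker(\mathcal{K}^M_{d+j,X}\r i_*\mathcal{K}^M_{d+j,X_n})$ and deduces surjectivity of $res$ from the long exact cohomology sequence once the vanishing $H^{d+1}(X,\mathcal{K}^M_{d+j,X\mid X_n})=0$ of Proposition \ref{vanishing} is established (Corollary \ref{corollaryvanishingidelicH}). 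Note also that for the theorem as stated one only needs the Gersten resolution on the generic fibre $X[\frac{1}{\pi}]$, where it is known (this enters through Lemma \ref{purity} and Lemma \ref{continuous}), not the global presentation of $H^{d}(X,\mathcal{K}^M_{d+j,X})$ from which you start.

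The second gap is the step you yourself flag as ``the main obstacle'': it is precisely the mathematical content of the theorem and your sketch does not address it. Lifting symbols pointwise is easy (units of $\O_{X,x}/(\pi^n)$ lift because $\O_{X,x}$ is local, no henselianness needed), and horizontal curve germs through a closed point exist by smoothness; the difficulty is choosing the lifts so that the boundary terms at all \emph{other} points of the chosen horizontal curves cancel, i.e.\ so that the aggregated idelic chain is a cycle restricting to the prescribed class. In the paper this matching is exactly what the machinery of Sections 2--4 achieves: the topology on Milnor $K$-groups attached to Parshin chains (Definition \ref{definitionrelaitvmilnorksheaf}(4)), the openness of kernels of boundary maps (Lemma \ref{continuous}), reduction to normalisations (Lemma \ref{lemmaintegralclosure}), the approximation statement (Lemma \ref{dense}), and the descending induction over Parshin chains supported on $X_1$ in the proof of Proposition \ref{vanishing}, which moves local cohomology classes from vertical points to horizontal ones. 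Without an argument of this type (or Bloch's deformation method used in \cite{KEW16}), ``aggregating these lifts \dots to match global boundaries'' restates the problem rather than solving it. Only your final observation is fine: a levelwise surjection from a constant pro-system is indeed an epimorphism in $\text{pro-}\text{Ab}$.
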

Theorem \ref{maintheoremalgebraization} is a partial response to a conjecture by Kerz, Esnault and Wittenberg saying that assuming the Gersten conjecture for the Milnor K-sheaf $\mathcal{K}^M_{n,X}$ the restriction map $res: \CH^{d}(X)\otimes \Z/p^r\Z \r "\text{lim}_n" H^{d}(X_1,\mathcal{K}^M_{d,X_n}/p^r)$ is an isomorphism if $\text{ch}(\text{Quot}(A))=0$ and if $k$ is perfect of characteristic $p>0$ (see \cite[Sec. 10]{KEW16}). The proof of Theorem \ref{maintheoremalgebraization} relies on so-called idelic arguments. A different approach using differential forms is explained in \cite{Lu18}.

Let again $\text{ch}(\text{Quot}(A))=0$ and $k$ be perfect of characteristic $p>0$. In the final section of this article we relate the restriction map $res$ to the $p$-adic cycle class map $\varrho_{p^r}^{d+j,j}:\CH^{d+j}(X,j)/p^r\r H^{2d+j}_{\text{\'et}}(X,\mathcal{T}_r(j))$. The $\mathcal{T}_r(n)$ are the complexes defined in \cite{Sa07} and called $p$-adic \'etale Tate twists. $\mathcal{T}_r(n)$ is an object in the derived category $D^b(X,\Z/p^r\Z)$ of bounded complexes of \'etale $\Z/p^r\Z$-sheaves on $X$. $\mathcal{T}_r(n)$ is expected to agree with $\Z(n)^\text{\'et}\otimes^\mathbb{L}\Z/p^r\Z$, where $\Z(n)^\text{\'et}$ denotes the conjectural  \'etale motivic complex of Beilinson-Lichtenbaum (see \cite[Sec. 1.3]{SS14}). If $p>n+1$, then $i^*\mathcal{T}_r(n)\cong \mathcal{S}_r(n)$, where $i$ is the inclusion $X_1\r X$ and $\mathcal{S}_r(n)$ is the syntomic complex defined in \cite{Ka87} (see \cite[Sec. 1.4]{Sa07}). In \cite{SS14}, Saito and Sato show the following result on the $p$-adic cycle class map:
\begin{theorem} (\cite[Thm. 1.3.1]{SS14})
Let $X$ be a regular scheme which is proper flat of finite type over the ring of integers $A$ of a $p$-adic local field $K$. Assume that $X$ has good or semistable reduction over $A$ and let $d$ be the fiber dimension of $X$ over $A$. Then the cycle class map
$$\varrho_{p^r}^{d,0}:\CH^d(X)/p^r\r H^{2d}_{\mathrm{\et}}(X,\mathcal{T}_r(d))$$
defined in \cite[Cor. 6.1.4]{Sa07} is surjective. 
\end{theorem}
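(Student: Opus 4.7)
The plan is to reduce the surjectivity of $\varrho^{d,0}_{p^r}$ to an analogous statement on the special fiber by means of Theorem \ref{maintheoremalgebraization}. I would work under the hypothesis $p > d+1$ underlying the comparison $i^*\mathcal{T}_r(d) \cong \mathcal{S}_r(d)$ recalled from \cite[Sec.~1.4]{Sa07}; this also ensures the Gersten conjecture for $\mathcal{K}^M_{d,X}/p^r$ by the proposition preceding Theorem \ref{maintheoremalgebraization}, giving the identifications $\CH^d(X)/p^r \cong H^d(X, \mathcal{K}^M_{d,X}/p^r)$ and $\CH^d(X_n)/p^r \cong H^d(X_1, \mathcal{K}^M_{d,X_n}/p^r)$ on the thickenings.

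The main construction is the commutative square
$$
\begin{xy}
\xymatrix{
H^d(X, \mathcal{K}^M_{d,X}/p^r) \ar@{->>}[r]^-{res} \ar[d]_-{\varrho^{d,0}_{p^r}} & \text{``}\mathrm{lim}_n\text{''}\, H^d(X_1, \mathcal{K}^M_{d,X_n}/p^r) \ar[d]^-{\mathrm{sym}} \\
H^{2d}_{\et}(X, \mathcal{T}_r(d)) \ar[r] & \text{``}\mathrm{lim}_n\text{''}\, H^{2d}_{\et}(X_n, \mathcal{T}_r(d)|_{X_n})
}
\end{xy}
$$
in the pro-category of abelian groups. The top row is surjective by the main Theorem \ref{maintheoremalgebraization} (case $j=0$). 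The bottom row should be an isomorphism via a proper/formal base change statement for $p$-adic \'etale Tate twists, combined with $\mathcal{T}_r(d)|_{X_1} \cong \mathcal{S}_r(d)$ and its evident analog on each thickening. For the right vertical symbol map, I would use that the top cohomology sheaf $\mathcal{H}^d(\mathcal{S}_r(d))$ is a logarithmic de Rham--Witt sheaf on $X_1$, and that by Bloch--Kato--Gabber--Geisser--Levine the sheaf $\mathcal{K}^M_{d,X_n}/p^r$ surjects onto it; since $X_1$ has \'etale $p$-cohomological dimension $d$, the hypercohomology spectral sequence for $\mathcal{S}_r(d)$ degenerates onto the top strip and yields surjectivity on $H^{2d}$.

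A diagram chase then produces the desired surjectivity: given a class $\alpha \in H^{2d}_{\et}(X, \mathcal{T}_r(d))$, transport it to the pro-system via the bottom isomorphism, lift it through the right symbol map to a pro-system of Milnor K-cohomology classes on the thickenings, lift that pro-system to $H^d(X, \mathcal{K}^M_{d,X}/p^r) \cong \CH^d(X)/p^r$ by Theorem \ref{maintheoremalgebraization}, and observe by the commutativity of the square that the resulting cycle class maps to $\alpha$.

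The main obstacle I anticipate is justifying the bottom base-change isomorphism. For prime-to-$p$ coefficients one would simply invoke \'etale proper base change as in diagram (\ref{diagrammintroduction}), but for $\mathcal{T}_r$ this is unavailable directly, so one must exploit the two-term structure of $\mathcal{T}_r(d)$ from \cite{Sa07} and its interaction with the pro-system of closed immersions $X_n \hookrightarrow X$, combined with completeness of $A$ and a Mittag--Leffler argument on the pro-system. A secondary technical point is the compatibility of $\varrho^{d,0}_{p^r}$ with the symbol maps on each $X_n$, which should follow from the Gersten-type construction of $\mathcal{T}_r(d)$ but requires a careful check of normalizations.
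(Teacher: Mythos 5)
There is a genuine mismatch between what you prove and what the statement asserts. The theorem you were asked about is not proved in this paper at all: it is quoted from \cite[Thm.\ 1.3.1]{SS14}, where it is established by a completely different method (\'etale duality and class field theoretic/Kato-homological arguments on the special fiber), in the stated generality: $X$ regular, proper flat over the ring of integers $A$ of an arbitrary, possibly ramified, $p$-adic local field, with good \emph{or semistable} reduction, and with no lower bound on $p$ in terms of $d$. Your route -- surjectivity of $res$ from Theorem \ref{maintheoremalgebraization} plus an identification of the pro-system of Milnor K-cohomology of the thickenings with syntomic cohomology and then with $H^{2d}_{\et}(X,\mathcal{T}_r(d))$ -- is exactly the strategy the paper uses, but only for its own weaker statement, Corollary \ref{corollarysurjectivitytate}: there $X$ is \emph{smooth} projective over $A=W(k)$ with $k$ finite and $p>d+1$. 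Each of these extra hypotheses is load-bearing and cannot be dropped in your argument: Theorem \ref{maintheoremalgebraization} and the whole Parshin-chain/Gersten machinery require $X$ smooth over $A$ (and Proposition \ref{GCMKp} needs $p>d-1$ to kill the $(d-1)!$ ambiguity), the symbol map to $\mathcal{H}^d(\mathcal{S}_r(d))$ and the comparison of Proposition \ref{propositionidentificationmilnorksyntomic} rest on the Bloch--Esnault--Kerz motivic pro-complex, which is only available over the unramified base $W(k)$ with smooth special fiber and $r<p$, and the identification $i^*\mathcal{T}_r(d)\cong\mathcal{S}_r(d)$ needs $p>d+1$. In the semistable or ramified case the special fiber is not smooth, the syntomic/log-de Rham--Witt description and the Kato homology vanishing used in Lemma \ref{derhamwittetalnis} are no longer at your disposal, and for small $p$ the syntomic complex $\mathcal{S}_r(d)$ is not even defined. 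So your proposal proves (a version of) Corollary \ref{corollarysurjectivitytate}, not the quoted theorem.

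Two technical points inside your sketch also need repair even in the restricted setting. First, the ``bottom base-change isomorphism'' does not require any formal/Mittag--Leffler argument: proper base change for torsion \'etale sheaves over the henselian base gives $H^{2d}_{\et}(X,\mathcal{T}_r(d))\cong H^{2d}_{\et}(X_1,i^*\mathcal{T}_r(d))$ directly, and one then uses $i^*\mathcal{T}_r(d)\cong\mathcal{S}_r(d)$ for $p>d+1$; the pro-system over the $X_n$ is constant on the \'etale side. Second, your degeneration argument for the hypercohomology spectral sequence of $\mathcal{S}_r(d)$ ``onto the top strip'' is too quick in the \'etale topology, since the $p$-cohomological dimension of $X_1$ over a finite field is $d+1$, not $d$; this is precisely why the paper works in the Nisnevich topology (Lemma \ref{lemmamilnorksynnis}) and then transfers to the \'etale site via the vanishing of Kato homology (Lemma \ref{derhamwittetalnis} and Lemma \ref{lemmasyntomicnisetale}), rather than arguing on $H^{2d}_{\et}$ directly.
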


We will show the following proposition: 
\begin{proposition}\label{propositionrelationmilnorktosyntomic} Let $W(k)$ be the Witt ring of a finite field $k$ of characteristic $p$ and $p>d$. Let $X$ be smooth and projective over $W(k)$. Then for all $j\geq d$ the map
$$"\mathrm{lim}_n" H^{d}(X_1,\mathcal{K}^M_{j,X_n}/p^r)\r H^{d+j}_{\mathrm{\et}}(X_1,\mathcal{S}_r(j)) $$
is an isomorphism of pro-abelian groups. Here we consider $H^{d+j}_{\mathrm{\et}}(X_1,\mathcal{S}_r(j))$ as a constant pro-system.
\end{proposition}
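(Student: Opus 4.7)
The plan is to factor the claimed map through a sheaf-level symbol map to Kato's syntomic complex and then to exploit the \'etale/Nisnevich change of topology together with the cohomological dimension of $X_1$. Write $\varepsilon\colon X_{1,\et}\r X_{1,\Nis}$ for the change-of-topology morphism.

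First, for each thickening $X_n$, the $\mathrm{dlog}$/symbol map yields a comparison morphism $\mathcal{K}^M_{j,X_n}/p^r\r R^j\varepsilon_*\mathcal{S}_r(j)$ of Nisnevich sheaves on $X_1$. The key input is to promote this into an isomorphism of pro-systems
$$"\mathrm{lim}_n"\,\mathcal{K}^M_{j,X_n}/p^r\;\xrightarrow{\sim}\;R^j\varepsilon_*\mathcal{S}_r(j).$$
This is a Kurihara-type comparison theorem, available in the range $p>j$ in which Kato's syntomic complex is well behaved: on the top cohomology sheaf $\mathcal{H}^j$ over the special fibre it specialises to the Bloch--Kato--Gabber isomorphism $\mathcal{K}^M_{j,X_1}/p^r\cong W_r\Omega^j_{X_1,\log}$; on the deeper thickenings, the $p$-adic (Kurihara) filtration on $\mathcal{K}^M_{j,X_n}/p^r$, whose graded pieces are built from K\"ahler differentials on $X_1$, matches term-by-term with the Hodge-type filtration on Kato's syntomic complex.

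Second, I would feed this into the Leray spectral sequence
$$E_2^{p,q}=H^p_{\Nis}(X_1,R^q\varepsilon_*\mathcal{S}_r(j))\;\Longrightarrow\;H^{p+q}_{\et}(X_1,\mathcal{S}_r(j))$$
and show that it collapses in total degree $d+j$. The Nisnevich cohomological dimension of $X_1$ is at most $d$, and a Beilinson--Lichtenbaum-type vanishing $R^q\varepsilon_*\mathcal{S}_r(j)=0$ for $q>j$ holds for Kato's syntomic complex in the range $p>j$; hence only the edge term $E_2^{d,j}=H^d_{\Nis}(X_1,R^j\varepsilon_*\mathcal{S}_r(j))$ survives, giving
$$"\mathrm{lim}_n"\,H^d(X_1,\mathcal{K}^M_{j,X_n}/p^r)\;\cong\;H^{d+j}_{\et}(X_1,\mathcal{S}_r(j)),$$
where the last step uses that $H^d$ is computed by the Gersten/Cousin resolution of $\mathcal{K}^M_{j,X_n}/p^r$, available by Proposition~\textup{1.1} (together with its extension to the thickenings via the filtration argument) and commuting with the pro-limit over $n$.

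The principal obstacle is the sheaf-level comparison in the first step: the case $n=1$ is Bloch--Kato--Gabber, but extending the symbol isomorphism to the pro-system of thickenings requires the Kurihara computation on the associated graded for the $p$-adic filtration, plus a careful compatibility check with the filtration coming from the syntomic complex. The hypothesis $j\geq d$ enters through the Leray/dimension argument: it guarantees that the Gersten complex for $\mathcal{K}^M_{j,X_1}/p^r$ has a nontrivial term $\bigoplus_{x\in X_1^{(d)}}\mathcal{K}^M_{j-d,\kappa(x)}/p^r$ in codimension $d$, so that $H^d$ is the correct \emph{top} Milnor $K$-cohomology group matching the surviving edge term of the spectral sequence.
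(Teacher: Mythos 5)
The first half of your plan is essentially the input the paper also uses: the identification of the pro-sheaf $"\mathrm{lim}_n"\,\mathcal{K}^M_{j,X_n}/p^r$ with the top cohomology sheaf of the (Nisnevich pushforward of the) syntomic complex is Proposition \ref{propositionBEK7.2}(2)--(4) of Bloch--Esnault--Kerz, whose proof indeed rests on Kurihara-type computations; combined with the hypercohomology spectral sequence and $\mathrm{cd}_{\Nis}(X_1)\leq d$ this gives the Nisnevich statement (Lemma \ref{lemmamilnorksynnis}). Two small corrections there: no Gersten/Cousin resolution on the thickenings $X_n$ is needed (and Proposition \ref{GCMKp} concerns smooth schemes over the discrete valuation ring, not the non-reduced $X_n$), and the hypothesis $j\geq d$ plays no role at this stage.

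The genuine gap is your claimed vanishing $R^q\epsilon_*\mathcal{S}_r(j)=0$ for $q>j$, which you use to collapse the Leray spectral sequence; this is false precisely in the crucial case $j=d$. From the triangle $p(j)\Omega^{\leq j}_{X_\cdot}[-1]\r \mathcal{S}_r(j)_{\et}\r W_r\Omega^j_{X_1,\mathrm{log}}[-j]$ one gets $R^{j+1}\epsilon_*\mathcal{S}_r(j)\cong R^1\epsilon_*W_r\Omega^j_{X_1,\mathrm{log}}$, and this sheaf is nonzero because $1-F$ is not surjective on sections over henselian local rings whose residue fields are not separably closed: already for $j=0$ its stalk at a closed point is $H^1_{\et}(k(x),\Z/p^r)\neq 0$ for $k(x)$ finite, and for $j=d=1$ the stalk at the generic point is $H^1(k(X_1),W_r\Omega^1_{\mathrm{log}})$, i.e. $p$-torsion in the Brauer group of a global function field, again nonzero. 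Hence in total degree $d+j$ your spectral sequence has the extra term $E_2^{d-1,j+1}=H^{d-1}_{\Nis}(X_1,R^1\epsilon_*W_r\Omega^j_{X_1,\mathrm{log}})$, and the differential $E_2^{d-2,j+1}\r E_2^{d,j}$ need not vanish, so the edge comparison is neither obviously injective nor surjective. This change-of-topology step is not formal: the paper handles it (Lemma \ref{lemmasyntomicnisetale}) by reducing, via the syntomic triangle and the coherence of $p(j)\Omega^{\leq j}_{X_\cdot}$, to the comparison of Nisnevich and \'etale cohomology of $W_r\Omega^d_{X_1,\mathrm{log}}$ in the top two degrees (Lemma \ref{derhamwittetalnis}), and that comparison is exactly where the finiteness of $k$ enters, through the vanishing of the Kato homology groups $KH^0_i(X_1,\Z/p^n)$ for $1\leq i\leq 3$ due to Jannsen--Saito. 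Your proposal replaces this arithmetic input by a Beilinson--Lichtenbaum-type vanishing that fails, and correspondingly misidentifies the role of the hypotheses: $j\geq d$ is used because $W_r\Omega^j_{X_1,\mathrm{log}}=0$ for $j>d$, reducing everything to the $j=d$ case settled by the Kato-homology argument, not because of the shape of the Gersten complex.
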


In sum we establish, making use of the above result on the Gersten conjecture, the following commutative diagram analogous to diagram (\ref{diagrammintroduction}) for $X$ smooth over $A=W(k)$ for a finite field $k$ of characteristic $p$, $j\geq 0$ and $p>d+j+1$ (see Proposition/Definition \ref{diagrammintroduction2proof}): 
\begin{equation}\label{diagrammintroduction2}
\begin{xy} 
  \xymatrix{
  \CH^{d+j}(X,j,\Z/p^r\Z) \ar[r]^-{\cong} & H^d(X,\mathcal{K}^M_{d+j,X}/p^r) \ar@{->>}[r]^-{res} \ar@{->>}[d]_{} & "\mathrm{lim}_n" H^d(X_1,\mathcal{K}^M_{d+j,X_n}/p^r) \ar[d]_{\cong} \\
 & H^{2d+j}_{\text{\'et}}(X,\mathcal{T}_r(d+j))  \ar[r]^-{\cong} & H^{2d+j}_{\mathrm{\text{\'et}}}(X_1,\mathcal{S}_r(d+j))   
  }
\end{xy} 
\end{equation}
\begin{notat*}
Unless otherwise specified, all cohomology groups are taken over the Zariski topology.
\end{notat*}
\paragraph{\textit{ Acknowledgement}.} I would like to heartily thank my supervisor Moritz Kerz for fruitful discussions leading to the results of this article.

\section{Parshin chains}\label{sectionparshinchains}
Let $X$ be an excellent scheme.
\begin{definition}
\begin{enumerate}
\item A chain on $X$ is a sequence of points $P=(p_0,...,p_s)$ on $X$ such that 
$$\overline{\{p_0\}}\subset \overline{\{p_1\}}\subset ...\subset \overline{\{p_s\}}.$$
\item A Parshin chain on $X$ is a chain $P=(p_0,...,p_s)$ such that $\mathrm{dim}\overline{\{p_i\}}=i$ for all $0\leq i\leq s$.
\item A $Q$-chain on $X$ is a chain $Q=(p_0,...,p_{s-2},p_s)$ such that $\mathrm{dim}\overline{\{p_i\}}=i$ for $i\in\{0,1,...,s-2,s\}$. We denote by $B(Q)$ the set of all $x\in X$ such that $Q(x)=(p_0,...,p_{s-2},x,p_s)$ is a Parshin chain.
\item Let $Z$ be a closed subscheme of $X$ and $U=X-Z$. A Parshin chain (resp. $\mathcal{Q}$-chain) on $(X,Z)$ is a Parshin chain $P=(p_0,...,p_s)$ (resp. $\mathcal{Q}$-chain $Q=(p_0,...,p_{s-2},p_s)$) such that $p_i\in Z$ for $i\leq s-1$ and $p_s\in U$ (resp. $p_i\in Z$ for $i\leq s-2$ and $p_s\in U$). A Parshin chain (resp. $\mathcal{Q}$-chain) on $(X,X)$ is a Parshin chain in the sense of (2) (resp. $\mathcal{Q}$-chain in the sense of (3)).
\item We say that a Parshin chain $P=(p_0,...,p_s)$ on $X$ is supported on a closed subscheme $Z$ of $X$ if $p_i\in Z$ for all $0\leq i\leq s$.
\item The dimension $d(P)$ of a chain $P=(p_0,...,p_s)$ is defined to be $\text{dim}\overline{\{p_s\}}$.
\end{enumerate}
\end{definition}

\begin{definition}\label{definitionlocalisationparshinchain} 
Let $P=(p_0,...,p_s)$ be a chain on $X$.
\begin{enumerate}
\item We define $\O_{X,P}=\O_{X,p_s}$ and $k(P)=k(p_s)$.
\item We define the finite product of henselian local rings $\O^h_{X,P}$ inductively as follows: If $s=0$, then $\O^h_{X,P}=\O^h_{X,p_0}$. If $s>0$, then assume that the ring $\O^h_{X,P'}$ over $\O_{X,p_0}$ has already been defined for $P'=(p_0,...,p_{s-1})$. Denote $\O^h_{X,P'}$ by $R$. Let $T$ be the finite set of prime ideals of $\O^h_{X,P'}$ lying over $p_s$ and 
$$\O^h_{X,P}=\prod_{\mathfrak{p}\in T}R^h_\mathfrak{p}.$$
Let $k^h(P)$ denote the finite product of residue fields of $\O^h_{X,P}$. 
\end{enumerate}
\end{definition}
We note that $T$ in Definition \ref{definitionlocalisationparshinchain}(2) is finite by \cite[Thm. 18.6.9 (ii)]{EGA4} since $X$ is excellent and therefore in particular noetherian. For a Parshin chain $P$ on $X$ we denote $\text{Spec}\O_{X,P}$ by $X_P$ and $\text{Spec}\O_{X,P}^h$ by $X^h_P$. For more details on Parshin chains see \cite[Sec. 1.6]{KaS86}.

We will need the following facts (see e.g. \cite[Ch. IV.]{Ha66}): Let $X$ be a locally noetherian scheme, $\mathcal{F}$ be a sheaf of abelian groups on $X$ and $\tau\in \{\text{Zar,Nis}\}$. Then there are coniveau spectral sequences 
$$E^{p,q}_1=\bigoplus_{x\in X^{(p)}}H^{p+q}_x(X_{\tau},\mathcal{F})\Rightarrow H^{p+q}(X_{\tau},\mathcal{F})$$
and isomorphisms 
$$H^{q}_x(X_{\Zar},\mathcal{F})\cong H^{q}_x(\mathcal{O}_{X,x},\mathcal{F}) \text{ and } H^{q}_x(X_{\Nis},\mathcal{F})\cong H^{q}_x(\mathcal{O}^h_{X,x},\mathcal{F})$$
for every $x\in X$ and $q\geq 0$. From the coniveau spectral sequence we get complexes
$$...\r \bigoplus_{x\in X^{(p-1)}}H^{p+q-1}_x(X_{\tau},\mathcal{F})\r \bigoplus_{x\in X^{(p)}}H^{p+q}_x(X_{\tau},\mathcal{F})\r \bigoplus_{x\in X^{(p+1)}}H^{p+q+1}_x(X_{\tau},\mathcal{F})\r..$$ 
We denote a morphism $H^{p+q}_y(X_{\tau},\mathcal{F})\r H^{p+q+1}_x(X_{\tau},\mathcal{F})$ arising this way by $\partial_{yx}$. We explain this notation as follows: If $\mathcal{F}=\mathcal{K}^M_{n,X},y\in X^{(p+q)},x\in X^{(p+q+1)},$ and if the Gesten conjecture holds for $\mathcal{K}^M_{n,X}$ (see Section \ref{GCMKp}), then the diagram
$$\begin{xy}
  \xymatrix{
        H^{p+q}_y(X_{\tau},\mathcal{K}^M_{n,X}) \ar[d]^-{\cong} \ar[r]  & H^{p+q+1}_x(X_{\tau},\mathcal{K}^M_{n,X})\ar[d]^-{\cong} \\
   K^M_{n-p-q}(k(y))  \ar[r]^-{} & K^M_{n-p-q-1}(k(x))
  }
\end{xy} $$
commutes, where the lower horizontal map is the tame symbol defined by passing to the normalisation and using the norm map for Milnor K-theory (see f.e. \cite[8.1.1]{GS06}).

Finally recall that the cohomological dimension of $X_{\Zar}$ and $X_{\Nis}$ is at most equal to dim$(X)$.

\section{The Gersten conjecture for Milnor K-theory mod $p$}\label{sectionGCMK}
Let $X$ be an excellent scheme and let $\mathcal{K}^M_{n,X}$ be the improved Milnor K-sheaf defined in \cite{Ke10}. 
\begin{definition}\label{Gcdef} We say that the Gersten conjecture holds for the (Milnor K-)sheaf $\mathcal{K}^M_{n,X}$ if the sequence of sheaves
$$0\r\mathcal{K}^M_{n,X}\rightarrow \bigoplus_{x\in X^{(0)}}i_{x,*}K^M_n(k(x))\rightarrow \bigoplus_{x\in X^{(1)}}i_{x,*}K^M_{n-1}(k(x))\r...$$
is exact.
\end{definition} 
This conjecture is known to hold for $\mathcal{K}^M_{n,X}$ if all local rings of $X$ are regular and equi-characteristic (see \cite{Ke09} and \cite[Prop. 10(8)]{Ke10}). If $X$ is smooth over a henselian local discrete valuation ring of mixed characteristic $(0,p)$, then the Gersten conjecture is not known to hold for the sheaf $\mathcal{K}^M_{n,X}$. However, if $p>n-1$, then we have the following much weaker result which we will use in Section \ref{sectionapplications}.

\begin{proposition}\label{GCMKp}
Let $A$ be a discrete valuation ring with uniformising parameter $\pi$ and residue field $k$ of characteristic $p>0$.
Let $B$ be a local ring, essentially smooth over $A$ with field of fractions $F$ and let $p>(n-1)$. Set $X:=\Spec(B)$. Then the sequence 
$$0\rightarrow K^M_n(B)/p^r\xrightarrow{i_n} K^M_n(F)/p^r\rightarrow \bigoplus_{x\in X^{(1)}} K^M_{n-1}(x)/p^r \rightarrow ...$$
is exact for all $r\geq 1$.
\end{proposition}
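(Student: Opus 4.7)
The plan is to reduce to the Gersten conjecture for Quillen $K$-theory with $\Z/p^r$-coefficients, which is known for essentially smooth local rings over a discrete valuation ring (through work of Gillet and Levine in the equi-characteristic case and Geisser in mixed characteristic, combined with Gabber--Suslin rigidity for mod-$p^r$ $K$-theory of henselian pairs). Writing $X=\Spec B$, this yields exactness of the Quillen--Gersten sequence
$$0\r K_n(B)/p^r\r K_n(F)/p^r\r \bigoplus_{x\in X^{(1)}}K_{n-1}(k(x))/p^r\r\cdots .$$

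To transport this exactness to Milnor $K$-theory I would invoke the theorem of Nesterenko--Suslin and (independently) Totaro: for every field $L$ the natural map $K^M_n(L)\r K_n(L)$ identifies $K^M_n(L)$ with the top-Adams-weight eigenspace of $K_n(L)$ after inverting $(n-1)!$, and in particular exhibits $K^M_n(L)$ as a direct summand of $K_n(L)$ over $\Z[1/(n-1)!]$. Since $p>n-1$, $(n-1)!$ is a unit in $\Z/p^r$, so $K^M_n(L)/p^r$ becomes a functorial direct summand of $K_n(L)/p^r$. Under this splitting the tame symbol is identified with the top-Adams-weight component of the Quillen boundary, so the Milnor--Gersten complex modulo $p^r$ sits termwise as a direct summand inside the Quillen--Gersten one. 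Exactness of the latter therefore forces exactness of the former.

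A remaining technical point is that one really works with the improved Milnor $K$-sheaf $\mathcal{K}^M_{n,X}$ of Kerz rather than the naive $K^M_n(B)$, but since the two agree for local rings with infinite residue field, one reduces to that situation by Kerz's norm trick, base-changing along a pro-$\ell$ extension of the residue field for some prime $\ell\neq p$, which leaves the Gersten sequence unchanged modulo $p^r$. The main obstacle in this plan is verifying the compatibility between the Nesterenko--Suslin splitting and the Gersten boundaries; this is essentially the formal statement that the boundary of the top-Adams-weight summand is again top-Adams-weight, but it forms the technical heart of the argument. A secondary issue is pinning down a precise reference for the mixed-characteristic Quillen--Gersten statement mod $p^r$ in the generality needed here; this is where I would expect to have to invest the most care.
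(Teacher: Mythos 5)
Your overall strategy -- feed in the Gersten conjecture for Quillen K-theory with $\Z/p^r$-coefficients and exploit that $(n-1)!$ is invertible mod $p^r$ when $p>n-1$ -- is exactly the paper's, but the way you transport exactness contains the one step that is genuinely hard, and you leave it open. To conclude that the Milnor--Gersten complex mod $p^r$ is a \emph{retract of complexes} of the Quillen--Gersten complex you need not only the (standard) fact that the inclusions $K^M_{n-c}(k(x))/p^r\rightarrow K_{n-c}(k(x);\Z/p^r)$ commute with residues, but also retractions commuting with the Gersten differentials, i.e.\ compatibility of the Nesterenko--Suslin/Totaro splitting with the boundary maps. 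This is precisely what you flag as ``the technical heart'' and do not verify; it is not formal (the weight decomposition only exists after inverting the relevant factorials, the boundaries are a priori only compatible with the weight filtration, and the retractions at the different spots of the complex involve different factorials $(n-1)!,(n-2)!,\dots$). The paper's proof is arranged to avoid this entirely: it sandwiches the Milnor row between two copies via $K^M_n\rightarrow K^Q_n\rightarrow K^M_n$, whose composite is multiplication by $(n-1)!$ (\cite[Sec. 4]{NS89}, \cite[Prop. 10(6)]{Ke10}), and then a short diagram chase using only the commutativity of the square involving the \emph{inclusion} (\cite[p. 449f.]{We13}) gives injectivity of $i_n$ and exactness at $K^M_n(F)/p^r$; the projection is never required to commute with residues. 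Exactness at all later spots involves only Milnor K-theory of fields and is quoted from \cite[Cor. 4.3]{Ge04} -- your plan would also need this (or the full compatible splitting) to cover those spots.

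Two further points. First, your proposed source for the Quillen-theoretic input is off: Gabber--Suslin rigidity for henselian pairs requires the coefficients to be invertible on the base, whereas here the coefficients are $p^r$ with $p$ the residue characteristic, so rigidity is unavailable; the paper instead invokes \cite[Thm. 8.2]{GL00} for the Gersten conjecture for K-theory with these coefficients, and the middle row must be K-theory \emph{with} $\Z/p^r$-coefficients, $K^Q_*(-,\Z/p^r)$, rather than the naive quotients $K_*(-)/p^r$ you wrote. Second, the reduction to infinite residue fields via a norm/pro-$\ell$ trick is harmless but unnecessary: working with Kerz's improved Milnor K-sheaf, the needed comparison with Quillen K-theory for local rings is already available in \cite{Ke10}.
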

\begin{proof}
First note that the Gersten conjecture for Quillen K-theory with finite coefficients holds for $B$ by \cite[Thm. 8.2]{GL00}.

We consider the following commutative diagram:
$$\begin{xy} 
  \xymatrix{
   K^M_n(B)/p^r \ar[r]^{i_n} \ar[d]_{} &  K^M_n(F)/p^r  \ar[d]^{} \ar[r]^{} \ar[d]_{} & \bigoplus_{x\in X^{(1)}} K^M_{n-1}(x)/p^r \ar[d]^{} \\
     K^Q_n(B,\Z/p^r) \ar[r]^{i_n^Q}   \ar[d]^{}  & K^Q_n(F,\Z/p^r) \ar[d]^{} \ar[r]^{}  & \bigoplus_{x\in X^{(1)}} K^Q_{n-1}(x,\Z/p^r) \\
      K^M_n(B)/p^r \ar[r]^{}  &  K^M_n(F)/p^r  \ar[r]^{}  & \bigoplus_{x\in X^{(1)}} K^M_{n-1}(x)/p^r
  }
\end{xy} $$
The composition $\mu:K^M_n(B)\r K^Q_n(B) \r K^M_n(B)$ is multiplication by $(n-1)!$ by \cite[Sec. 4]{NS89} and \cite[Prop. 10(6)]{Ke10}. Let us first show the injectivity of $i_n$: Let $\alpha\in K^M_n(B)/p^r $ and suppose that $i_n(\alpha)=0$. Then $(n-1)!\cdot \alpha=0$ since $i_n^Q$ is injective. For $p>(n-1)$ we have that $(p,(n-1)!)=1$. This implies that $\alpha=0$. The exactness at $K^M_n(F)/p^r$ can be seen as follows: Let $\alpha\in  \text{ker}[K^M_n(F)/p^r\r \bigoplus_{x\in X^{(1)}} K^M_{n-1}(x)/p^r]$. Then $(n-1)!\alpha\in \text{im} (i_n)$ since the square on the upper right commutes (see \cite[p. 449f.]{We13}) and the middle row is exact at $K^Q_n(F,\Z/p^r)$. Again since $(p,(n-1)!)=1$ it follows that $\alpha\in \text{im} (i_n)$.

Exactness at the other places follows for example from \cite[Cor. 4.3]{Ge04}.
\end{proof}

\begin{remark}
See \cite[Cor. 4.4]{NS89} for a similar result.
\end{remark}

We will repeatedly use the following purity statement which follows from the Gersten conjecture:
\begin{lemma}\label{purity}
Let $D$ be an effective Cartier divisor on $X$. Let $i:D\r X$ be the inclusion and $U:=X\setminus \mathrm{supp}D$. Let $\mathcal{K}^M_{n,X|D}$ be the sheaf defined in Definition \ref{definitionrelaitvmilnorksheaf}(2). Let $x\in X$ be a point which is not be contained in $D$ and assume the Gersten conjecture for the sheaf $\mathcal{K}^M_{n,U}$. Then for $t=\mathrm{codim}_X(x)$ there is a canonical isomorphism 
$$H^t_x(X,\mathcal{K}^M_{n,X|D})\cong K^M_{n-t}(k(x)).$$
\end{lemma}
\begin{proof}
First note that $$H^t_x(X,\mathcal{K}^M_{n,X|D})\cong H^t_x(\text{Spec}\O_{X,x},\mathcal{K}^M_{n,X|D}\mid_{\text{Spec}\O_{X,x}})\cong H^t_x(\text{Spec}\O_{X,x},\mathcal{K}^M_{n,\text{Spec}\O_{X,x}}).$$
The purity isomorphism now follows from the Gersten conjecture for $\mathcal{K}^M_{n,\text{Spec}\O_{X,x}}$ which holds by assumption since $x\in U$.
Indeed, applying $\Gamma_x$ (see \cite[p. 225]{Ha66}) to the exact sequence
$$\mathcal{K}^M_{n,\text{Spec}\O_{X,x}}\r...\r \bigoplus_{y\in \O_{X,x}^{(c-1)}}i_{y,*}K^M_{n-t+1}(k(y))\r i_{x,*}K^M_{n-t}(k(x))\r 0$$ 
gives the sequence
$$...\r 0\r K^M_{n-t}(k(x))$$
(the last term is in degree $t$) since $i_{y,*}K^M_{n-t+1}(k(y))$ is the constant sheaf on the integral scheme $\overline{\{y\}}$.
\end{proof}

\section{Some topology on Milnor K-groups}
In this section we define a topology on Milnor K-groups and state two lemmas which we will need in the proof of our main theorem.

Recall that the naive Milnor K-sheaf $\mathcal{K}^{M,\mathrm{naive}}_n$ is defined to be the sheafification of the functor
$$R\mapsto (R^\times)^{\otimes n}/<a_1\otimes ...\otimes a_n|a_i+a_j=1 \text{ for some } i\neq j>$$
from the category of commutative rings to abelian groups and that there is a natural homomorphism of sheaves 
$$\mathcal{K}^{M,\mathrm{naive}}_n\r \mathcal{K}^M_n$$
to the improved Milnor K-sheaf which is surjective (see \cite{Ke10}). In particular there is the following commutative diagram for a commutative local ring $R$, an ideal $I\subset R$ and $K=\text{Frac}(R)$:
$$\begin{xy} 
  \xymatrix{
   \mathcal{K}^{M}_n(R) \ar[rr]^{} \ar[rd] &    & \mathcal{K}^{M}_n(R/I) \\
       & K^M_n(K)  &  \\
      \mathcal{K}^{M,\mathrm{naive}}_n(R) \ar@{->>}[uu]^{} \ar[ru]^{}\ar[rr]^{}  &    & \mathcal{K}^{M,\mathrm{naive}}_n(R/I) \ar@{->>}[uu]^{}
  }
\end{xy} $$
This implies that that when defining a topology on $K^M_n(K)$ as in the following Definition \ref{definitionrelaitvmilnorksheaf}(4) we may work with both $\mathcal{K}^{M,\mathrm{naive}}_n$ or $\mathcal{K}^M_n$. We will use the improved Milnor K-sheaf and at some points implicitly use its generation by symbols.
\begin{definition}\label{definitionrelaitvmilnorksheaf}
\begin{enumerate}
\item For a commutative ring $R$ and an ideal $I\subset R$ we define $K^{M}_n(R,I)$ to be $\mathrm{ker}[\mathcal{K}^{M}_n(R)\r \mathcal{K}^{M}_n(R/I)]$ and similarly for $K^{M,\mathrm{naive}}_n(R,I)$.
\item Let $D$ be an effective Cartier divisor on $X$. We define $\mathcal{K}^{M}_{n,X\mid D}$ to be the kernel of the restriction map $\mathcal{K}^{M}_{n,X} \r i_*\mathcal{K}^{M}_{n,D}$ for $i:D\r X$ the inclusion. Again similarly for $K^{M,\mathrm{naive}}_{n,X\mid D}$.
\item Let $R$ be an excellent semi-local integral domain of dimension $1$ with field of fractions $K$. We endow $R$ with the $J_R$-adic topology, where $J_R$ is the Jacobsen radical of $R$. We endow $K^M_n(K)$ with the structure of a topological group by taking the subgroups generated by $\{U_1,...,U_n\}$, where $U_i$ ranges over all open subgroups of $R^\times$, as a fundamental system of neighbourhoods of $0$ in $K^M_n(K)$. 
\item For a Parshin chain $P=(p_0,...,p_{s-1},p_s)$, and $P'=(p_0,...,p_{s-1})$, on an excellent scheme $X$ and $Y=\overline{\{p_s\}}$ we define a topology on $K^{M}_n(k(P))$ (resp. $K^{M}_n(k^h(P))$) by taking the images of $K^{M}_n(\O_{Y,P'},I)$ (resp. $K^{M}_n(\O_{Y,P'}^h,I)$) as a fundamental system of neighbourhoods of $0$, where $I$ ranges over all open ideals, with respect to the topology defined in (3), of the one dimensional local ring $\O_{Y,P'}$ (resp. semi-local ring $\O_{Y,P'}^h$).
\end{enumerate}
\end{definition}

\begin{remark}
If we set $R:=\O_{Y,P'}$ (resp. $\O_{Y,P'}^h$), then the topologies on $K^M_n(K), K=\text{Frac}(R),$ defined in (3) and (4) coincide.
\end{remark}
 
\begin{example}
Let $m\geq 0$ be an integer. If $R$ in (3) is a discrete valuation ring with quotient field $K$, maximal ideal $\mathfrak{p}\subset R$ and generic point $\eta$, then the subgroups generated by $K^M_n(K,m):=\{1+\mathfrak{p}^m,R^{\times},...,R^{\times}\}$ of $K^M_n(K)$ generate the topology on $K^M_n(K)$ with respect to the Parshin chain $(\mathfrak{p},\eta)$.
\end{example}

\begin{lemma}\label{lemmaintegralclosure}(Cf. \cite[Prop. 2]{Ka83}) Let $R$ be an excellent semi-local integral domain of dimension $1$ with field of fractions $K$.
Let $\tilde{R}$ be the integral closure of $R$ in $K$. Then the topology of $K^M_n(K)$ defined by $\tilde{R}$ coincides with that defined by $R$.
\end{lemma}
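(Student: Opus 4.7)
The plan is to verify both inclusions of topologies by comparing their fundamental systems of neighborhoods of $0$ in $K^M_n(K)$. Write $N_R(I_1,\ldots,I_n)$ for the subgroup of $K^M_n(K)$ generated by symbols $\{1+a_1,\ldots,1+a_n\}$ with $a_i\in I_i$, where the $I_i$ are open ideals of $R$ contained in $J_R$, and analogously $N_{\tilde R}(J_1,\ldots,J_n)$ for open ideals $J_i\subseteq J_{\tilde R}$ of $\tilde R$.

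Since $R$ is excellent and one-dimensional, $\tilde R$ is a finite $R$-module, so the conductor $\mathfrak{f}=\{\,r\in R\mid r\tilde R\subseteq R\,\}$ is a nonzero ideal of both $R$ and $\tilde R$, with $\mathfrak f\tilde R=\mathfrak f$. I record three preliminary facts: (i) $R/\mathfrak f$ and $\tilde R/\mathfrak f$ are both Artinian (the first because $\mathfrak f$ contains a non-zero-divisor in the one-dimensional Noetherian domain $R$, the second because it is finite over the first), so $\mathfrak f$ is open in both the $J_R$-adic and the $J_{\tilde R}$-adic topologies; (ii) since $\tilde R$ is integral over $R$, every maximal ideal of $\tilde R$ contracts to a maximal ideal of $R$, yielding $J_R\subseteq J_{\tilde R}$; (iii) $\tilde R/J_R\tilde R$ is a finite module over the semisimple ring $R/J_R$, hence Artinian, so $J_{\tilde R}^M\subseteq J_R\tilde R$ for some $M\geq 1$.

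For the inclusion $T_{\tilde R}\subseteq T_R$, given open ideals $J_i\subseteq\tilde R$, I set $I_i:=J_i\cap R$. By (ii) one has $J_R^M\subseteq J_{\tilde R}^M\subseteq J_i$, so $I_i$ is an open ideal of $R$, and since $I_i\subseteq J_i$ inside $K^\times$ we immediately get $N_R(I_1,\ldots,I_n)\subseteq N_{\tilde R}(J_1,\ldots,J_n)$. For the reverse inclusion $T_R\subseteq T_{\tilde R}$, given open ideals $I_i\subseteq R$, I take $J_i:=I_i\mathfrak f$. Because $\mathfrak f\tilde R=\mathfrak f$, each $J_i$ is simultaneously an ideal of $R$ and of $\tilde R$; combining $J_R^N\subseteq I_i$ with fact (iii) gives $J_{\tilde R}^{NM}\subseteq J_R^N\tilde R\subseteq I_i\tilde R$, and multiplying by $J_{\tilde R}^L\subseteq\mathfrak f$ from (i) yields $J_{\tilde R}^{NM+L}\subseteq I_i\tilde R\cdot\mathfrak f=I_i\mathfrak f=J_i$, so $J_i$ is open in $\tilde R$. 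Crucially $J_i\subseteq I_i\cap\mathfrak f\subseteq R$, so every generator $\{1+b_1,\ldots,1+b_n\}$ of $N_{\tilde R}(J_1,\ldots,J_n)$ has $b_i\in I_i$ and is therefore already one of the generators of $N_R(I_1,\ldots,I_n)$. Thus $N_{\tilde R}(J_1,\ldots,J_n)\subseteq N_R(I_1,\ldots,I_n)$.

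The only real subtlety is the bookkeeping of ideals across the extension $R\subseteq\tilde R$: showing that $I_i\mathfrak f$ remains open in $\tilde R$ needs both the Artin--Rees-type input from (iii) and the explicit role of the conductor to push generators back into $R$. The Milnor $K$-theoretic content is otherwise mild, since once we know that the elements $b_i$ defining a generating symbol of $N_{\tilde R}(J_1,\ldots,J_n)$ lie in $I_i\subseteq R$, these symbols are, by definition, also generators of $N_R(I_1,\ldots,I_n)$.
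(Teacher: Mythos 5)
Your argument is correct and follows essentially the same route as the paper: excellence makes the normalisation $\tilde R$ finite over $R$, so the conductor is a nonzero ideal of both rings, and one compares the neighbourhood filtrations through it (the paper compresses this to a single element $f\in J_R\setminus\{0\}$ with $f\tilde R\subset R$ and the inclusion $1+f^{i+1}\tilde R\subset 1+f^iR$). Your version just makes the cofinality bookkeeping (Artinian quotients, nilpotence of Jacobson radicals) explicit; no gap.
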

\begin{proof}
Since $R$ is excellent, the normalisation is finite and there is some $f\in J_R\setminus \{0\}$ such that $f\tilde{R}\subset R$. Therefore for every $i\geq 1$
$$1+f^{i+1}\tilde{A}\subset 1+f^iA.$$
\end{proof}

\begin{lemma}\label{continuous}(cf. \cite[Prop. 2.7]{KaS86}, \cite[Lem. 6.2]{Ke11}) Let $X$ be an excellent integral scheme. Let $U$ be a regular open subscheme of $X$ and $D$ an effective Weil divisor with support $X-U$. Let $y\in U$ and $x$ be of codimension $1$ on $\overline{\{y\}}$. Let dim$\O_{X,y}=t$ and assume the Gersten conjecture for the sheaf $\mathcal{K}^M_{n,U}$. Then the map 
$$\partial_{yx}:K^M_{n-t}(k(y))\cong H^t_{y}(X,\mathcal{K}^M_{n,X|D})\r H^{t+1}_x(X,\mathcal{K}^M_{n,X|D})$$
annihilates the image of $K^M_{n-t}(\O_{Y,x},J_x)$ for some non-zero ideal $J_x\subset \O_{Y,x}$.
In particular the kernel of $\partial_{yx}$ is open with respect to the topology defined in Definition \ref{definitionrelaitvmilnorksheaf}(4) and the Parshin chain $(x,y)$.
\end{lemma}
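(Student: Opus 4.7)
The plan is to reduce via normalization to the continuity of the classical tame symbol on a discrete valuation ring. First I would localize at $x$ and work on $\Spec \O_{X,x}$, writing $R := \O_{Y,x}$ for the one-dimensional excellent local integral domain with fraction field $k(y)$. Its normalization $\tilde R$ in $k(y)$ is a finite semi-local Dedekind domain by excellence of $X$; let $\mathfrak{p}_1,\dots,\mathfrak{p}_r$ be its maximal ideals (the primes lying over $x$) and $\tilde R_{\mathfrak{p}_i}$ the corresponding DVRs with residue fields $\kappa(\mathfrak{p}_i)$ finite over $k(x)$. By Lemma \ref{lemmaintegralclosure} the topologies on $K^M_{n-t}(k(y))$ induced by $R$ and by $\tilde R$ coincide, so it suffices to exhibit a non-zero ideal $\tilde J\subset \tilde R$ whose image $K^M_{n-t}(\tilde R,\tilde J)\hookrightarrow K^M_{n-t}(k(y))$ is killed by $\partial_{yx}$.

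Next I would identify $\partial_{yx}$ with a classical tame-symbol-plus-norm construction. Under the identification $H^t_y(X,\mathcal{K}^M_{n,X|D})\cong K^M_{n-t}(k(y))$ from Lemma \ref{purity} (which applies because $y\in U$), a coniveau chase—using that $\mathcal{K}^M_{n,X|D}$ agrees with $\mathcal{K}^M_{n,X}$ on $U$ and that the $d_1$-differentials in the Gersten complex for $\mathcal{K}^M_{n,X}$ over $U$ are the classical tame symbols recalled at the end of Section \ref{sectionparshinchains}—expresses $\partial_{yx}$ as $\sum_i \mathrm{N}_{\kappa(\mathfrak{p}_i)/k(x)}\circ \partial_{\mathfrak{p}_i}$, possibly post-composed with the canonical map $K^M_{n-t-1}(k(x))\to H^{t+1}_x(X,\mathcal{K}^M_{n,X|D})$ in the case $x\in D$, where $\partial_{\mathfrak{p}_i}:K^M_{n-t}(k(y))\to K^M_{n-t-1}(\kappa(\mathfrak{p}_i))$ is the standard tame symbol of the DVR $\tilde R_{\mathfrak{p}_i}$. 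It therefore suffices to find $\tilde J$ for which every $\partial_{\mathfrak{p}_i}$ annihilates the image of $K^M_{n-t}(\tilde R,\tilde J)$.

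The core input is the well-known fact that for a DVR the tame symbol vanishes on symbols in units, so $K^M_{n-t}(\tilde R_{\mathfrak{p}_i})\subset \ker \partial_{\mathfrak{p}_i}$ for each $i$. Taking any non-zero $\tilde J\subset \bigcap_i \mathfrak{p}_i$, the image of $K^M_{n-t}(\tilde R,\tilde J)$ is contained in $K^M_{n-t}(\tilde R)$ and hence in the intersection of these kernels; transporting back via Lemma \ref{lemmaintegralclosure} produces the desired $J_x\subset R$. The openness of $\ker \partial_{yx}$ in the $(x,y)$-Parshin chain topology is then immediate from Definition \ref{definitionrelaitvmilnorksheaf}(4), since in a one-dimensional noetherian local domain any non-zero ideal contains a power of $\mathfrak{m}$ and is therefore open in the Jacobson radical topology. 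The main obstacle I anticipate is the clean identification of $\partial_{yx}$ with the norm-plus-tame-symbol description in the case $x\in D$, where Lemma \ref{purity} does not directly describe the target; here one has to run the coniveau comparison induced by the inclusion $\mathcal{K}^M_{n,X|D}\hookrightarrow \mathcal{K}^M_{n,X}$ and use that the source is unaffected by this inclusion (since $y\in U$) while carefully selecting $\tilde J$ small enough that a representing symbol lifts to a class in the relative sheaf on a suitable neighbourhood.
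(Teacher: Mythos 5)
Your reduction breaks down at exactly the point you flag as the ``main obstacle,'' and that point is the substance of the lemma. The claimed factorization of $\partial_{yx}$ as $\sum_i \mathrm{N}_{\kappa(\mathfrak{p}_i)/k(x)}\circ\partial_{\mathfrak{p}_i}$ followed by a canonical map $K^M_{n-t-1}(k(x))\r H^{t+1}_x(X,\mathcal{K}^M_{n,X|D})$ is false when $x\in D$, which is the only case that matters for the application (in Proposition \ref{vanishing} the relevant points $x$ lie on $X_1$). Purity (Lemma \ref{purity}) is available only at points outside $D$, and the local cohomology of the relative sheaf at a point of $D$ is not a Milnor K-group of the residue field: already for $t=0$ one has $H^1_x(X,\mathcal{K}^M_{n,X|D})\cong K^M_n(k(y))/K^M_n(\O_{X,x},J)$ with $J$ the ideal cutting out $D$, and $\partial_{yx}$ is the quotient map. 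If $\partial_{yx}$ factored through the tame symbol it would kill all unit symbols, i.e.\ the image of $K^M_n(\O_{X,x})$, which would force $K^M_n(\O_{X,x})\r K^M_n(\O_{X,x}/J)$ to be zero --- false in general. For the same reason your conclusion that \emph{any} non-zero $\tilde J\subset\bigcap_i\mathfrak{p}_i$ works is wrong: the kernel of $\partial_{yx}$ only contains the relative group $K^M_n(\O_{X,x},J)$, so $J_x$ must be chosen in terms of $D$ (and, for $t\geq 1$, in terms of the ideals at the divisorial codimension-one points one moves through). Passing to the absolute sheaf via $\mathcal{K}^M_{n,X|D}\hookrightarrow\mathcal{K}^M_{n,X}$ does not repair this, since the induced map $H^{t+1}_x(X,\mathcal{K}^M_{n,X|D})\r H^{t+1}_x(X,\mathcal{K}^M_{n,X})$ need not be injective, so vanishing of the absolute boundary gives no information about the relative one.

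The paper's proof is structured quite differently: it is an induction on $t$. For $t=0$ the description above gives the result at once with $J_x=J$. For $t\geq 1$ one picks $z\in X^{t-1}$ with $y$ in the regular locus of $\overline{\{z\}}$ and never computes $H^{t+1}_x$ at all; instead one uses that the coniveau sequence is a complex. Given a symbol $\alpha=\{\bar a_1,\dots,\bar a_{n-t}\}$ with $\bar a_1\in 1+J_x$, one lifts it to $\beta=\{\pi,a_1,\dots,a_{n-t}\}\in K^M_{n-t+1}(k(z))$, where $\pi$ is a uniformizer at $y$ in the normalization $\tilde A$ of $\O_{Z,x}$ and $a_1\equiv 1$ modulo the product of the ideals $J^{(y'')}$ at the divisorial points $y''$ and of auxiliary uniformizers $\pi_i$ at the other primes of $\mathrm{div}(\pi)$ (this is the choice of $J_x$ in (\ref{surjectionlifting})). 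Then $\partial_{zy}(\beta)=\alpha$, the components of $\partial(\beta)$ at the non-divisorial points $y'\neq y$ vanish, and the components at the $y''$ lie in $K^M_{n-t+1}(\O_{Z,y''},J_{y''})$ and are therefore killed by the next differential by the induction hypothesis; the complex property then forces $\partial_{yx}(\alpha)=0$. Your normalization step via Lemma \ref{lemmaintegralclosure} and the final openness remark are fine, but they are the easy part; the careful, $D$-adapted construction of $J_x$ and of the lift $\beta$ is what your ``choose $\tilde J$ small enough'' elides, and without it the proof does not go through.
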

\begin{proof}
We proceed by induction on $t$. The case $t=0$ is clear since in that case $Y=X$ and $H^{1}_x(X,\mathcal{K}^M_{n,X|D})\cong K^M_n(k(y))/K^M_n(\O_{Y,x},J)$ for $x\in D^{(0)}$ and $J$ corresponding to $D$.

If $t\geq 1$, then we take some point $z\in X^{t-1}$ such that $y$ lies in the regular locus of $\overline{\{z\}}$. Consider the complex
\begin{multline}\label{complex} H^{t-1}_z(X,\mathcal{K}^M_{n,X|D})\r 
\bigoplus_{y'\in \text{Spec}\O_{Z,x}^{(1)}\setminus D} H^t_{y'}(X,\mathcal{K}^M_{n,X|D})\oplus \bigoplus_{y''\in \text{Spec}\O_{Z,x}^{(1)}\cap D} H^t_{y''}(X,\mathcal{K}^M_{n,X|D})\\ \r H^{t+1}_x(X,\mathcal{K}^M_{n,X|D})
\end{multline}
coming from the coniveau spectral sequence in Section \ref{sectionparshinchains}.
Applying the induction assumption to $H^{t-1}_z(X,\mathcal{K}^M_{n,X|D})\r  H^t_{y''}(X,\mathcal{K}^M_{n,X|D})$ for all $y''\in \text{Spec}\O_{Z_x}^{(1)}\cap D$, we see that it suffices to show that the map
$$K^M_{n-t+1}(k(z))\xrightarrow{(\partial,\text{Id})}\bigoplus_{y'\in \text{Spec}\O_{Z,x}^{(1)}\setminus D}K^M_{n-t}(k(y')) \oplus \bigoplus_{y''\in \text{Spec}\O_{Z,x}^{(1)}\cap D}K^M_{n-t+1}(K(z))/K^M_{n-t+1}(\O_{Z,y''},J_{y''})$$
annihilates the image of $K^M_{n-t}(\O_{Y,x},J_x)$ in $K^M_{n-t}(k(y))$ for some non-zero ideal $J_x\subset \O_{Y,x}$ given some non-zero ideals $J_{y''}\subset \O_{Z,y''}$. Indeed, in that case if $\alpha\in\text{Im}(K^M_{n-t}(\O_{Y,x},J_x)\r K^M_{n-t}(k(y)))$, then there is some $\beta\in K^M_{n-t+1}(k(z))$ such that $\partial_{zy}(\beta)=\alpha$ and such that $(\bigoplus_{y'\neq y\in \text{Spec}\O_{Z_x}^{(1)}}\partial_{zy''},\text{Id})(\beta)=0$. Since (\ref{complex}) is a complex, this implies that $\partial_{yx}(\alpha)=0$.

Now let $A:=\O_{Z,x}$. By Lemma \ref{lemmaintegralclosure} we may assume that the $\O_{Z,y''}$ are normal (semi-local) rings. By the definition of $\partial$ we may work with the normalisation $\tilde{A}
$ of $A$. Let $\{y''_1,...,y_r''\}=\text{Spec}\O_{Z,x}^{(1)}\cap D$ and let $J^{(y_i'')}$ be ideals in $\tilde{A}$ such that $J^{(y_i'')}\O_{Z,y_i''}=J_{y_i''}$. Let $\mathfrak{q}$ be the prime ideal corresponding to $y$.
Let $\pi\in A$ such that $v_{\mathfrak{q}}(\pi)=1$. Let $\{\mathfrak{p}_{1+r},...,\mathfrak{p}_{t}\}$ be the finite set of prime ideals in $\tilde{A}$ such that $v_{\mathfrak{p}_i}(\pi)>0,1+r\leq i\leq t$. By a standard approximation lemma (see e.g. \cite[Lem. 9.1.9(b)]{Li02}) we can choose an element $\pi_i$ for all $i$ with $r+1\leq i\leq t$ satisfying $v_{\mathfrak{p}_i}(\pi_i)=1$ and $v_{\mathfrak{q}}(\pi_i)=0$. 
Now we can choose a non-zero ideal 
\begin{equation}\label{surjectionlifting}
J^{(x)}\subset J^{(y_1'')}...J^{(y_r'')}(\pi_{r+1})...(\pi_{t})(\tilde{A}/\mathfrak{q})\twoheadleftarrow J^{(y_1'')}...J^{(y_r'')}(\pi_{r+1})...(\pi_{t})\tilde{A}.
\end{equation}
Let $J_x:=J^{(x)}\O_{Y,x}$. Now given a symbol $\alpha:=\{\bar{a}_1,...,\bar{a}_{n-t}\}\in K^M_{n-t}(\O_{Y,x},J_x)$ with $\bar{a}_1\in 1+J_x$, lift $\alpha$ to $\beta:=\{\pi,a_1,...,a_{n-t}\}\in K^M_{n-t+1}(k(z))$ lifting $\bar{a}_1$ via the surjection in (\ref{surjectionlifting}) to $a_1$ and lifting the other $\bar{a}_i$ arbitrarily. Then $\beta$ satisfies the required properties since
\begin{enumerate}
\item $\partial_{zy}(\beta)=\alpha$.
\item If $\tilde{y}'\notin \text{div}(\pi)$, then $\partial_{z\tilde{y}'}=0$ since $\pi,a_1,...,a_{n-t}\in \O_{\tilde{A},\tilde{y}'}^\times$.
\item If $\tilde{y}'\in \text{div}(\pi)$, i.e. $\tilde{y}'\sim \mathfrak{p}_i$, then $\partial_{z\tilde{y}'}=0$ since $a_1=1$ mod $(\pi_i)$.
\item $a_1\in K^M_{n-t+1}(\O_{Z,y''},J_{y''})$ for all $y''\in \text{Spec}\O_{Z,x}^{(1)}\cap D$.
\end{enumerate}
\end{proof}

\begin{remark}
In \cite[Prop. 2.7]{KaS86} the above lemma was proved in the Nisnevich topology. The proof in the Zariski topology follows the argument in $\textit{loc. cit.}$ We recall the proof for the convenience of the reader and to convince them of this claim. In \cite[Lem. 6.2]{Ke11} the last step of the proof is given under the assumption that $A$ is a two-dimensional excellent henselian local ring. 
\end{remark} 

\begin{lemma}\label{dense}
Given a family of inequivalent discrete valuations $v_1,...,v_s$ on a valued field $F$, the diagonal map
$$K^M_n(F)\r \bigoplus_{v_i}K^M_n(F_{v_i}),$$
has dense image, were we write $F_{v_i}$ instead of $F$ in order to indicate which valuation defines the topology on $F$.
\end{lemma}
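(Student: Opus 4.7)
The plan is to reduce the density statement to the classical weak approximation theorem for a field equipped with finitely many inequivalent discrete valuations. Since $K^M_n(F)$ is generated by symbols and since, by Definition \ref{definitionrelaitvmilnorksheaf}(3), a fundamental system of neighborhoods of $0$ in $K^M_n(F_{v_j})$ is generated by subgroups of the form $\{U_1, \ldots, U_n\}$ with each $U_k \subset \mathcal{O}_{v_j}^\times$ an open subgroup, it suffices to handle a target tuple $(\alpha_1, \ldots, \alpha_s)$ in which each component $\alpha_i = \{a_1^{(i)}, \ldots, a_n^{(i)}\}$ is a single symbol in $K^M_n(F)$; the general case follows by additivity. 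Concretely, for any prescribed neighborhoods $V_i$ of $0$ in $K^M_n(F_{v_i})$, I would construct $\beta \in K^M_n(F)$ with $\beta - \alpha_i \in V_i$ for every $i$.

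I would build $\beta$ as a sum $\beta = \sum_{i=1}^s \beta_i$ of approximating symbols, one per index $i$. For each pair $(i,k)$, classical weak approximation for $F$ with respect to the inequivalent valuations $v_1, \ldots, v_s$ supplies an element $b_k^{(i)} \in F^\times$ that simultaneously satisfies $v_i(b_k^{(i)}/a_k^{(i)} - 1) \geq M$ for any prescribed $M$ and $b_k^{(i)} \in 1 + \mathfrak{m}_{v_j}^N$ for every $j \neq i$ and any prescribed $N$. Set $\beta_i := \{b_1^{(i)}, \ldots, b_n^{(i)}\} \in K^M_n(F)$. The standard multilinear identity
\[
\{b_1, \ldots, b_n\} - \{a_1, \ldots, a_n\} = \sum_{k=1}^n \{a_1, \ldots, a_{k-1},\, b_k/a_k,\, b_{k+1}, \ldots, b_n\}
\]
shows that $\beta_i|_{v_i} - \alpha_i$ lies in any prescribed neighborhood of $0$ in $K^M_n(F_{v_i})$ provided $M$ is large enough, since each ratio $b_k^{(i)}/a_k^{(i)}$ can be forced into an arbitrarily small neighborhood of $1$ in $\mathcal{O}_{v_i}^\times$. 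Meanwhile, at $v_j$ with $j \neq i$, every entry of $\beta_i$ belongs to $1 + \mathfrak{m}_{v_j}^N$, so $\beta_i|_{v_j}$ lies in the subgroup of $K^M_n(F_{v_j})$ generated by $\{1 + \mathfrak{m}_{v_j}^N, \mathcal{O}_{v_j}^\times, \ldots, \mathcal{O}_{v_j}^\times\}$, which is a basic neighborhood of $0$. Choosing $M$ and $N$ sufficiently large, $\beta = \sum_i \beta_i$ approximates $\alpha_l$ in each $K^M_n(F_{v_l})$ to the prescribed precision, since the contribution from the index $l$ approximates $\alpha_l$ and all other contributions lie in $V_l$.

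The main conceptual point is the continuity observation that a symbol all of whose entries lie close to $1$ sits in a small neighborhood of $0$, which is essentially built into the definition of the topology in Definition \ref{definitionrelaitvmilnorksheaf}(3); combined with the multilinear expansion above, it reduces everything to classical weak approximation in $F$. The remaining book-keeping with the precisions $M$ and $N$ is routine. Note that $b_k^{(i)} \neq 0$ is automatic from the condition $b_k^{(i)} \in 1 + \mathfrak{m}_{v_j}^N$ whenever $s \geq 2$, and from the $v_i$-approximation of $a_k^{(i)} \neq 0$ otherwise, so that the symbol $\beta_i$ is well defined in $K^M_n(F)$.
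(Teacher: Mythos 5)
Your overall strategy---approximating the entries of the target symbols by classical weak approximation---is exactly what the paper's one-line proof appeals to, and your off-diagonal estimate is fine: for $j\neq i$ every entry of $\beta_i$ lies in $1+\mathfrak{m}_{v_j}^N$, so $\beta_i$ is literally a generator of any prescribed basic neighbourhood once $N$ is large. The gap is in the diagonal comparison. By Definition \ref{definitionrelaitvmilnorksheaf}(3) the basic neighbourhoods of $0$ in $K^M_n(F_{v_i})$ are generated by symbols $\{u_1,\ldots,u_n\}$ with \emph{all} entries in open subgroups of $\mathcal{O}_{v_i}^\times$ (cofinally, by the Example, by $\{1+\mathfrak{m}_{v_i}^m,\mathcal{O}_{v_i}^\times,\ldots,\mathcal{O}_{v_i}^\times\}$). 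But the terms $\{a_1,\ldots,a_{k-1},\,b_k/a_k,\,b_{k+1},\ldots,b_n\}$ produced by your telescoping identity have only the single entry $b_k/a_k$ close to $1$; the remaining entries are the original $a_l$ and the approximants $b_l$, which are arbitrary elements of $F^\times$ and in general not $v_i$-units (indeed $v_i(b_l)=v_i(a_l)$). Such a symbol is not a generator of any basic neighbourhood, and your stated continuity principle (``a symbol all of whose entries lie close to $1$ is small'') does not apply to it. What your argument actually uses is the stronger claim that $\{1+\mathfrak{m}_{v_i}^M,\,F^\times,\ldots,F^\times\}$ lands in any prescribed basic neighbourhood for $M\gg0$, and that is neither part of the definition of the topology nor obvious.

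Nor is this easily patched. The natural repair is to first rewrite each target symbol, using multilinearity and $\{\pi,\pi\}=\{-1,\pi\}$, so that all entries are $v_i$-units except at most one equal to a uniformizer $\pi$, which by weak approximation you may choose with $\pi\in 1+\mathfrak{m}_{v_j}^N$ for $j\neq i$; approximating only the unit entries then makes every cross term have unit entries with one entry in $1+\mathfrak{m}_{v_i}^M$---and these are handled by the Example---\emph{except} for the terms still carrying $\pi$, i.e.\ of the shape $\{1+\pi^M w,\,u_2,\ldots,u_{n-1},\,\pi\}$. Whether these lie in $\langle\{1+\mathfrak{m}_{v_i}^m,\mathcal{O}_{v_i}^\times,\ldots,\mathcal{O}_{v_i}^\times\}\rangle$ for $M\gg m$ is a genuine question about the unit filtration on Milnor K-theory of a (not necessarily henselian) discretely valued field: the Steinberg relation only yields $M\cdot\{1+\pi^M w,\pi\}=-\{1+\pi^M w,-w\}$, i.e.\ the desired containment after multiplication by $M$. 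So as written the proof is missing an input at precisely this point (a structure result for the filtration, or a different organisation of the approximation); the paper itself sidesteps the issue by citing the standard approximation theorem without spelling out this step.
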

\begin{proof}
This follows from standard approximation theorems for $F$. See e.g. \cite[II.3.4]{Ne92}.
\end{proof}

\section{Main theorem}\label{sectionmaintheorem}
In this section we prove Theorem \ref{maintheoremalgebraization}. 

We return to the situation of the introduction. Let $A$ be an excellent henselian discrete valuation ring with uniformising parameter $\pi$ and residue field $k$ and let $X$ be a smooth projective scheme over Spec$(A)$ of relative dimension $d$. Let $X_n:=X\times_A A/(\pi^n)$, i.e. $X_1$ is the special fiber and the $X_n$ are the respective thickenings of $X_1$. 
\begin{proposition}\label{vanishing}
For all $j\geq 0$ the group 
$$H^{d+1}_\mathrm{Zar}(X,\mathcal{K}^M_{j+d,X\mid X_n})=0.$$
\end{proposition}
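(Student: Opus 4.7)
My plan is to reduce the vanishing to the surjectivity of the last Gersten-style differential, and then to construct explicit lifts via curves through each closed point of $X$, using the topology on Milnor K-groups developed in the previous section.

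First I would set up a Gersten-type resolution of $\mathcal{K}^M_{j+d, X \mid X_n}$. From the defining short exact sequence
$$0 \to \mathcal{K}^M_{j+d, X \mid X_n} \to \mathcal{K}^M_{j+d, X} \to i_* \mathcal{K}^M_{j+d, X_n} \to 0$$
together with the long exact sequences of local cohomology, the assumed Gersten conjecture for $\mathcal{K}^M_{j+d, X}$ and the analogous concentration statement for $\mathcal{K}^M_{j+d, X_n}$ (which follows from the Gersten conjecture on the smooth reduction $X_1$ by filtering $X_n$ by its nilpotent ideal) imply that $H^q_x(X_{\mathrm{Zar}}, \mathcal{K}^M_{j+d, X \mid X_n}) = 0$ for every $x \in X$ and every $q \neq \mathrm{codim}_X(x)$. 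Hence the coniveau spectral sequence collapses onto a single row, and
$$H^{d+1}(X, \mathcal{K}^M_{j+d, X \mid X_n}) \;=\; \mathrm{coker}\!\left( \partial \colon \bigoplus_{y \in X^{(d)}} H^d_y(X, \mathcal{K}^M_{j+d, X \mid X_n}) \longrightarrow \bigoplus_{x \in X^{(d+1)}} H^{d+1}_x(X, \mathcal{K}^M_{j+d, X \mid X_n}) \right).$$

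It then suffices to show that $\partial$ is surjective. Every closed point $x$ of $X$ lies in $X_1 \subset X_n$ since $X$ is proper over the henselian $A$, and the LES of local cohomology at $x$ gives the extension
$$0 \to K^M_j(k(x)) \to H^{d+1}_x(X, \mathcal{K}^M_{j+d, X \mid X_n}) \to K^M_{j-1}(k(x)) \to 0,$$
so it is enough to lift each factor separately. For an element of the quotient $K^M_{j-1}(k(x))$, I would pick a regular one-dimensional closed subscheme $C \subset X$ through $x$ whose generic point $\eta_C$ lies outside $X_n$; such curves exist because $X$ is smooth and projective over the excellent henselian DVR $A$. Then $\eta_C \in X^{(d)}$, $\eta_C \notin X_n$, and Lemma \ref{purity} identifies $H^d_{\eta_C}(X, \mathcal{K}^M_{j+d, X \mid X_n}) \cong K^M_j(k(\eta_C))$. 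A symbol $\{a_1,\ldots,a_{j-1}\} \in K^M_{j-1}(k(x))$ is lifted to $\{\pi, \tilde a_1, \ldots, \tilde a_{j-1}\} \in K^M_j(k(\eta_C))$, where $\pi$ is the restriction to $C$ of the uniformiser of $A$; the tame symbol at $x$ then recovers the original class. The subgroup $K^M_j(k(x))$ is handled analogously with a curve $C \subset X_1$ through $x$ and a lift in $K^M_{j+1}(k(\eta_C))$ obtained by adjoining a uniformiser at $x$ on $C$.

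The main obstacle is that such a lift $\beta$ a priori also contributes at every other closed point $x' \in C$, and these unwanted boundary terms must be shown to vanish so that $\partial(\beta)$ equals $\alpha$ and nothing else. This is precisely where the topology on Milnor K-groups comes in. Lemma \ref{continuous} tells us that for each $x' \neq x$ on $C$ the differential $\partial_{\eta_C, x'}$ annihilates an open subgroup of $K^M_j(k(\eta_C))$ for the topology attached to the Parshin chain $(x', \eta_C)$. Lemma \ref{dense} then permits approximating an arbitrary symbol in $K^M_j(k(\eta_C))$ by one that lies simultaneously in all of these open subgroups at every $x' \neq x$, while preserving the prescribed image at $x$. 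After this modification, $\beta$ satisfies $\partial(\beta)_x = \alpha$ and $\partial(\beta)_{x'} = 0$ for all remaining closed points of $C$, establishing the surjectivity of $\partial$ and hence the proposition.
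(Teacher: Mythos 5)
Your reduction of the statement to the surjectivity of $\bigoplus_{y\in X^{(d)}}H^{d}_y(X,\mathcal{K}^M_{d+j,X\mid X_n})\to\bigoplus_{x\in X^{(d+1)}}H^{d+1}_x(X,\mathcal{K}^M_{d+j,X\mid X_n})$ is correct, but not for the reason you give: concentration of $H^q_x(X,\mathcal{K}^M_{d+j,X\mid X_n})$ in degree $q=\mathrm{codim}_X(x)$ is neither known nor true in general at points lying on the special fiber (Lemma \ref{purity} applies only to points off the divisor); what one actually uses is that Zariski local cohomology vanishes above the codimension, so the coniveau $E_1$-page sits in the row $q=0$ and $H^{d+1}$ is the cokernel of the last differential. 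The serious gap comes next. The asserted extension $0\to K^M_j(k(x))\to H^{d+1}_x(X,\mathcal{K}^M_{d+j,X\mid X_n})\to K^M_{j-1}(k(x))\to 0$ is unjustified and false in general: already for $d=0$ and $n\geq 2$ one has $H^{1}_x\cong K^M_j(\mathrm{Frac}\,\mathcal{O}_{X,x})/K^M_j(\mathcal{O}_{X,x},(\pi^n))$, whose subobject is the Milnor K-group of the thickened local ring, not of $k(x)$; for $d\geq 1$ there is no Gersten-type description at all of the local cohomology of the relative sheaf at points of $X_1$ --- gaining control over exactly these groups is the content of the proposition, so it cannot be assumed. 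For the same reason your treatment of the ``subgroup $K^M_j(k(x))$'' via a vertical curve $C\subset X_1$ collapses: $\eta_C$ lies on the divisor, so $H^{d}_{\eta_C}(X,\mathcal{K}^M_{d+j,X\mid X_n})$ is not $K^M_{j+1}(k(\eta_C))$ (nor $K^M_j(k(\eta_C))$), and the proposed symbol lift has nowhere to live.

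The approximation step is also misplaced. An irreducible one-dimensional closed subscheme of $X$ through $x$ with horizontal generic point is finite over the henselian ring $A$, hence has $x$ as its \emph{only} closed point; this is precisely how the paper exploits henselianness, and no correction at ``other closed points of $C$'' is needed or even possible. Where Lemma \ref{continuous} and Lemma \ref{dense} are genuinely required is elsewhere: one must show that classes supported at \emph{vertical} points $y$ of every intermediate codimension can be rewritten, modulo boundaries from points of codimension one less, as classes supported at horizontal points, without creating new contributions at the finitely many other vertical codimension-one points of $\overline{\{z\}}$ through the base point of the chain. This is achieved in the paper by a descending induction over Parshin chains supported on $X_1$ (from $r=d$ down to $r=0$), each step taking place in a two-dimensional local situation where the relevant finiteness and the openness/density statements apply. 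Your proposal has no analogue of this induction, and without it you only show that the horizontal points hit some part of $H^{d+1}_x$; you do not show that they generate it, which is the actual assertion to be proved.
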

\begin{proof} 
By the coniveau spectral sequence and cohomological vanishing we have to show that the map
$$\bigoplus_{y\in X^{(d)}}H_y^{d}(X,\mathcal{K}^M_{d+j,X\mid {X_n}})\rightarrow \bigoplus_{x\in X^{(d+1)}}H^{d+1}_x(X,\mathcal{K}^M_{d+j,X\mid {X_n}})$$
is surjective. In order to show this, we show that the map
$$\bigoplus_{y\in (\text{Spec}\O_{X,x}[\frac{1}{\pi}])^{d}}H_y^{d}(X,\mathcal{K}^M_{d+j,X\mid {X_n}})\rightarrow H^{d+1}_x(X,\mathcal{K}^M_{d+j,X\mid {X_n}})$$
is surjective for any $x\in X^{(d+1)}$. This suffices since $(\text{Spec}\O_{X,x}[\frac{1}{\pi}])^d\subset X^{(d)}$ and since, as $A$ is henselian, any $y\in (\text{Spec}\O_{X,x}[\frac{1}{\pi}])^{d}$ restricts to just one closed point $x\in X^{(d+1)}$.
Let us start with the case $d=0$: Let $X_x':=X_x-x$. Then
$$H^{1}_x(X,\mathcal{K}^M_{j,X\mid X_n})\cong H^0(X_x',\mathcal{K}^M_{j,X\mid {X_n}})/H^0(X_x,\mathcal{K}^M_{j,X\mid X_n}),$$
and $H_{\mu}^{0}(X,\mathcal{K}^M_{j,X\mid {X_n}})$, $\mu$ being the generic point of $X$, surjects onto $H^{1}_x(X,\mathcal{K}^M_{j,X\mid X_n})$ since $H_{\mu}^{0}(X,\mathcal{K}^M_{j,X\mid {X_n}})$ is isomorphic to $H^0(X_x',\mathcal{K}^M_{j,X\mid {X_n}})$.

Let $d\geq 1$ and $x\in X^{(d+1)}$. We have that 
$$H^{d+1}_x(X,\mathcal{K}^M_{d+j,X\mid {X_n}})\cong H^{d}(X_{x}',\mathcal{K}^M_{d+j,X\mid {X_n}})$$ 
and again it follows from the coniveau spectral sequence and cohomological vanishing that $ H^{d}(X_x',\mathcal{K}^M_{d+j,X\mid {X_n}})$ is isomorphic to
$$ \text{coker} (\bigoplus_{z\in (X_{x})^{d-1}}H^{d-1}_z(X_{x},\mathcal{K}^M_{d+j,X\mid {X_n}}) \r \bigoplus_{y\in (X_{x})^d}H^d_y(X_{x},\mathcal{K}^M_{d+j,X\mid {X_n}})). $$
By Lemma \ref{purity} we have that 
$$H^d_y(X_{x},\mathcal{K}^M_{d+j,X\mid {X_n}})\cong \mathcal{K}^M_{j,X}(k(x,y))$$
for $y\in (X_{x}[\frac{1}{\pi}])^d$. It therefore suffices to move elements of $H^d_y(X_{x},\mathcal{K}^M_{d+j,X\mid {X_n}})$ for $y\in X_x^{d-r}\setminus (X_{x}[\frac{1}{\pi}])^d$ to the horizontal components, i.e. with $y\in (X_{x}[\frac{1}{\pi}])^d$, using the 'Q-chains' $H^{d-1}_z(X_{x},\mathcal{K}^M_{d+j,X\mid {X_n}})$. 

We write $P_r$ for a Parshin chain $(x,...)$ of dimension $r$ and let $x_{P_r}$ denote the closed point of $X_{P_r}$ and $X_{P_r}'$ the open subscheme $X_{P_r}\setminus \{x_{P_r}\}$. We proceed by descending in induction in $r\geq 0$, starting with $r=d$, to show that the map  
$$\bigoplus_{y\in (X_{P_r}[\frac{1}{\pi}])^{d-r}}H^{d-r}_y(X_{P_r},\mathcal{K}^M_{d+j,X\mid {X_n}})\r H^{d-r+1}_{x_{P_r}}(X_{P_r},\mathcal{K}^M_{d+j,X\mid {X_n}})$$
is surjective for all Parshin chains $P_r$ supported on $X_1$. 

The group $H^{d-r+1}_{x_{P_r}}(X_{P_r},\mathcal{K}^M_{d+j,X\mid {X_n}})$ is isomorphic to
\begin{equation*}
\begin{split}
\text{coker} (\bigoplus_{z\in (X_{P_r})^{d-1-r}}H^{d-1-r}_z(X_{P_r},\mathcal{K}^M_{d+j,X\mid {X_n}}) \r \bigoplus_{y\in (X_{P_r})^{d-r}}H^{d-r}_y(X_{P_r},\mathcal{K}^M_{d+j,X\mid {X_n}})) \\ \cong H^{d-r}(X_{P_r}',\mathcal{K}^M_{d+j,X\mid {X_n}})
\end{split}
\end{equation*} 
for $r<d$ and to 
$$H^{0}(X_{P_d}',\mathcal{K}^M_{d+j,X\mid {X_n}})/H^{0}(X_{P_d},\mathcal{K}^M_{d+j,X\mid {X_n}})$$ 
for $r=d$.
If $r=d$, then $H^{0}_y(X_{P_d},\mathcal{K}^M_{d+j,X\mid {X_n}})$ is isomorphic to $H^{0}(X_{P_d}',\mathcal{K}^M_{d+j,X\mid {X_n}})$ which implies the induction beginning.

We now do the induction step. Let $\alpha\in H^{d-r}_y(X_{P_r},\mathcal{K}^M_{d+j,X\mid {X_n}})$ for $y\in X_{P_r}^{d-r}\setminus (X_{P_r}[\frac{1}{\pi}])^{d-r}$. Then the map
$$\bigoplus_{z\in (X_{(P_r,y)})^{d-r-1}}H^{d-r-1}_z(X_{(P_r,y)},\mathcal{K}^M_{d+j,X\mid {X_n}})\r H^{d-r}_y(X_{P_r},\mathcal{K}^M_{d+j,X\mid {X_n}})$$
is surjective. This follows again from the coniveau spectral sequence and the isomorphism $H^{d-r}_y(X_{P_r},\mathcal{K}^M_{d+j,X\mid {X_n}})\cong H^{d-r-1}(X_{(P_r,y)}',\mathcal{K}^M_{d+j,X\mid {X_n}})$. By assumption we have that 
$$\text{coker} (\bigoplus_{t\in (X_{(P_r,y)})^{d-r-2}}H^{d-r-2}_t(X_{(P_r,y)},\mathcal{K}^M_{d+j,X\mid {X_n}}) \r \bigoplus_{z\in (X_{(P_r,y)})^{d-r-1}}H^{d-r-1}_z(X_{(P_r,y)},\mathcal{K}^M_{d+j,X\mid {X_n}}))$$
is generated by $K^M_{r+j+1}(k(P))$ for all Parshin chains $P=(P_r,y,z)$ of dimension $r+2$ on $(X,X_1)$. We may assume that $\alpha$ is in the image of $K^M_{r+j+1}(k(P))$ for some such $P$. Then by Lemma \ref{continuous} the kernel of the map $$\partial_{zy}-\alpha:K^M_{r+j+1}(k(P))\r H^{d-r}_y(X_{P_r},\mathcal{K}^M_{d+j,X\mid {X_n}})$$
is open in $K^M_{r+j+1}(k(P))$ and the kernel of the map $$\partial_{zy'}:K^M_{r+j+1}(k(P))\r H^{d-r}_{y'}(X_{P_r},\mathcal{K}^M_{d+j,X\mid {X_n}})$$
is open in $K^M_{r+j+1}(k(P_r,y',z))$ for all $y'\neq y\in X_{P_r}^{d-r}\cap X_1$ with $\overline{\{x_{P_r}\}}\subset \overline{\{y'\}}\subset \overline{\{z\}}$. By Lemma \ref{dense} and Lemma \ref{lemmaintegralclosure} the diagonal image of $$K^M_{r+j+1}(k(Q))\cong H^{d-r-1}_z(X_{P_r},\mathcal{K}^M_{d+j,X\mid {X_n}})$$ for a Q-chain $Q=(P_r,z)$ is dense in the direct sum (with finitely many summands) $K^M_{r+j+1}(k(P))\oplus_{y'\neq y\in X_{P_r}^{d-r}\cap  \overline{\{z\}}_1}K^M_{r+j+1}(k(P_r,y',z))$ which implies that $\alpha$ is in the image of some $\beta\in H^{d-r-1}_z(X_{x},\mathcal{K}^M_{d+j,X\mid {X_n}})$ with $\beta$ mapping to zero in $H^{d-r}_{y'}(X_{x},\mathcal{K}^M_{d+j,X\mid {X_n}})$ for all $y'\neq y\in((X_{x})_{1})^d$.
\end{proof}

\begin{remark}
The proof of Proposition \ref{vanishing} is inspired by the proof of Theorem 2.5 in \cite{KaS86} and the proof of Theorem 8.2 in \cite{Ke11}. In both of these articles the authors work in the Nisnevich topology. We note that the proof of Proposition \ref{vanishing} also works in the Nisnevich topology if for every Parshin chain $P=(p_0,...,p_s)$ we replace $\O_{X,P}=\O_{X,p_s}$ by $\O_{X,P}^h$ according to Definition \ref{definitionlocalisationparshinchain}.
We therefore get that  
$$H^{d+1}_\mathrm{Nis}(X,\mathcal{K}^M_{j+d,X\mid X_n})=0$$
for all $j\geq 0$. 
\end{remark}

\begin{remark}\label{zarnismotivicthickenings}
If $\mathrm{ch}(k)=0$ and $A=k[[t]]$ or if $A$ is the Witt ring $W(k)$ of a perfect field $k$ of ch$(k)>2$, then there are exact sequences of sheaves
$$0\r \Omega^{r-1}_{X_1}\rightarrow \mathcal{K}^M_{r,X_n}\rightarrow \mathcal{K}^M_{r,X_{n-1}}\rightarrow 0$$
and
$$0\r \Omega^{r-1}_{X_1}/B_{n-2}\Omega^{r-1}_{X_1}\rightarrow \mathcal{K}^M_{r,X_n}\rightarrow \mathcal{K}^M_{r,X_{n-1}}\rightarrow 0$$
respectively, by \cite[Sec. 2]{BEK14'} and \cite[Sec. 12]{BEK14}. Under the above assumptions this implies that the canonical map
$$H^{d}_{\mathrm{Zar}}(X_1,\mathcal{K}^M_{d+j,X_n})\r H^{d}_\mathrm{Nis}(X_1,\mathcal{K}^M_{d+j,X_n})$$ is an isomorphism for all $n\in \N_{>0}$. Indeed, it follows from the Gersten conjecture for the Milnor K-sheaf $\mathcal{K}^M_{*,X_1}$ that the maps $H^{i}_{\mathrm{Zar}}(X_1,\mathcal{K}^M_{d+j,X_1})\r H^{i}_\mathrm{Nis}(X_1,\mathcal{K}^M_{d+j,X_1})$ are isomorphisms for all $i$ and the sheaves $\Omega^{r-1}_{X_1}$ and $\Omega^{r-1}_{X_1}/B_{n-2}\Omega^{r-1}_{X_1}$ are coherent. The claim now follows by induction on $n$. \end{remark}

\begin{corollary}\label{corollaryvanishingidelicH}
\begin{enumerate}
\item The restriction map $res:H^{d}(X,\mathcal{K}^M_{d+j,X})\xrightarrow{} H^{d}(X_1,\mathcal{K}^M_{d+j,X_n})$ is surjective. In particular the map of pro-systems 
$$res: H^{d}(X,\mathcal{K}^M_{d+j,X}) \r "\mathrm{lim}_n" H^{d}(X_1,\mathcal{K}^M_{d+j,X_n})$$
is an epimorphism in $\text{pro-}\text{Ab}$ for all $j\geq 0$.
\item The restriction map $res:H^{d}(X,\mathcal{K}^M_{d+j,X}/p^r)\xrightarrow{} H^{d}(X_1,\mathcal{K}^M_{d+j,X_n}/p^r)$ is surjective. In particular the map of pro-systems 
$$res: H^{d}(X,\mathcal{K}^M_{d+j,X}/p^r) \r "\mathrm{lim}_n" H^{d}(X_1,\mathcal{K}^M_{d+j,X_n}/p^r)$$
is an epimorphism in $\text{pro-}\text{Ab}$ for all $j\geq 0$. 
\end{enumerate}
Here and in the following we always consider $H^{d}(X,\mathcal{K}^M_{d+j,X})$ (resp. $H^{d}(X,\mathcal{K}^M_{d+j,X}/p^r)$) as a constant pro-system in in $\text{pro-}\text{Ab}$.
\end{corollary}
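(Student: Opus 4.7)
The plan is to derive the corollary directly from Proposition~\ref{vanishing} via a single short exact sequence of Zariski sheaves on $X$. Writing $i_n:X_n\hookrightarrow X$ for the closed immersion, Definition~\ref{definitionrelaitvmilnorksheaf}(2) (together with the surjectivity of $\mathcal{K}^M_{d+j}(R)\twoheadrightarrow \mathcal{K}^M_{d+j}(R/I)$ for local rings, which is standard for the improved Milnor K-sheaf) yields the short exact sequence
$$0\rightarrow \mathcal{K}^M_{d+j,X\mid X_n}\rightarrow \mathcal{K}^M_{d+j,X}\rightarrow i_{n,*}\mathcal{K}^M_{d+j,X_n}\rightarrow 0.$$
Since $X_n$ and $X_1$ share the same underlying topological space, one has $H^d(X,i_{n,*}\mathcal{K}^M_{d+j,X_n})=H^d(X_1,\mathcal{K}^M_{d+j,X_n})$, and the associated long exact sequence contains
$$H^d(X,\mathcal{K}^M_{d+j,X})\xrightarrow{res}H^d(X_1,\mathcal{K}^M_{d+j,X_n})\rightarrow H^{d+1}(X,\mathcal{K}^M_{d+j,X\mid X_n}).$$
The last group vanishes by Proposition~\ref{vanishing}, whence $res$ is surjective. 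The first pro-system assertion is then immediate: a morphism of pro-abelian groups represented by levelwise surjections is an epimorphism in $\textup{pro-Ab}$, and this is exactly what we have produced (the source being constant in $n$).

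For the mod $p^r$ version I would compare the integral and mod $p^r$ restriction maps via the commutative square
$$\begin{xy}\xymatrix{
H^d(X,\mathcal{K}^M_{d+j,X}) \ar@{->>}[r]^-{res} \ar[d] & H^d(X_1,\mathcal{K}^M_{d+j,X_n}) \ar[d] \\
H^d(X,\mathcal{K}^M_{d+j,X}/p^r) \ar[r] & H^d(X_1,\mathcal{K}^M_{d+j,X_n}/p^r)
}\end{xy}$$
and show that the right vertical map is surjective. For this, apply the long exact sequence attached to the short exact sequence of Zariski sheaves $0\rightarrow p^r\mathcal{K}^M_{d+j,X_n}\rightarrow \mathcal{K}^M_{d+j,X_n}\rightarrow \mathcal{K}^M_{d+j,X_n}/p^r\rightarrow 0$ on $X_1$: its next term $H^{d+1}(X_1,p^r\mathcal{K}^M_{d+j,X_n})$ vanishes because the Zariski cohomological dimension of $X_1$ is at most $d$. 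Commutativity of the square then forces the bottom horizontal map to be surjective, and applying this argument for every $n$ delivers the mod $p^r$ pro-epimorphism.

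There is no real obstacle here: all of the substance is already absorbed into Proposition~\ref{vanishing}, and the mod $p^r$ statement requires only the cohomological dimension bound on $X_1$ together with an elementary diagram chase to transfer surjectivity from integral to torsion coefficients.
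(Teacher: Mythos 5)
Your argument is correct. For the integral statement you follow exactly the paper's route: the short exact sequence $0\r \mathcal{K}^M_{d+j,X\mid X_n}\r \mathcal{K}^M_{d+j,X}\r i_{n,*}\mathcal{K}^M_{d+j,X_n}\r 0$, the long exact sequence, the vanishing of $H^{d+1}(X,\mathcal{K}^M_{d+j,X\mid X_n})$ from Proposition \ref{vanishing}, and the observation that a levelwise surjection onto the pro-system is an epimorphism in $\text{pro-}\text{Ab}$ (the paper invokes exactness of $"\mathrm{lim}_n"$ from Artin--Mazur for the same purpose). For the mod $p^r$ statement, however, you take a genuinely different and more economical route. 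The paper tensors the relative sequence with $\Z/p^r\Z$, obtaining $0\r \mathcal{K}^M_{d+j,X\mid X_n}/p^r/\mathcal{I}\r \mathcal{K}^M_{d+j,X}/p^r\r \mathcal{K}^M_{d+j,X_n}/p^r\r 0$ for an auxiliary sheaf $\mathcal{I}$, and kills the obstruction group by combining two vanishings: $H^{d+1}(X,\mathcal{K}^M_{d+j,X\mid X_n}/p^r)=0$, which it asserts follows "by the same arguments as in the integral case" (i.e.\ rerunning the Parshin-chain argument of Proposition \ref{vanishing} with finite coefficients), and $H^{d+2}(X,\mathcal{I})=0$ by Grothendieck vanishing on the $(d+1)$-dimensional space $X$. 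You instead deduce the mod $p^r$ surjectivity formally from the integral one: the reduction map $H^d(X_1,\mathcal{K}^M_{d+j,X_n})\r H^d(X_1,\mathcal{K}^M_{d+j,X_n}/p^r)$ is surjective because its obstruction lies in $H^{d+1}(X_1,p^r\mathcal{K}^M_{d+j,X_n})$, which vanishes since $X_1$ is a noetherian space of dimension $d$, and then a diagram chase in your commutative square transfers surjectivity to the bottom row. This buys you something concrete: you never need the mod $p^r$ analogue of Proposition \ref{vanishing}, which the paper only sketches by reference, so your argument is both more elementary and more self-contained; what the paper's route yields in exchange is the stronger intermediate vanishing of $H^{d+1}(X,\mathcal{K}^M_{d+j,X\mid X_n}/p^r/\mathcal{I})$ (and implicitly of $H^{d+1}(X,\mathcal{K}^M_{d+j,X\mid X_n}/p^r)$), which is not needed for the corollary itself. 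Your side remark justifying surjectivity of $\mathcal{K}^M_{d+j,X}\r i_{n,*}\mathcal{K}^M_{d+j,X_n}$ via lifting of units in local rings and the surjection from the naive Milnor K-sheaf is a point the paper leaves implicit, and it is correct.
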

\begin{proof}
For (1) consider the short exact sequence
$$0\r \mathcal{K}^M_{d+j,X\mid X_n} \r \mathcal{K}^M_{d+j,X} \r \mathcal{K}^M_{d+j,X_n} \r 0$$
 and the induced long exact sequence 
$$...\r H^{d}(X,\mathcal{K}^M_{d+j,X\mid X_n})\xrightarrow{i} H^{d}(X,\mathcal{K}^M_{d+j,X})\xrightarrow{res} H^{d}(X_1,\mathcal{K}^M_{d+j,X_n})\r H^{d+1}(X,\mathcal{K}^M_{d+j,X\mid X_n})\r...$$
The statement now follows from Proposition \ref{vanishing} and the fact that $"\mathrm{lim}_n"$ is exact when considered as a functor $\text{Hom}(I^{\text{op}},\text{Ab})\r \text{pro-}\text{Ab}$, where $I$ is a small filtering category (see \cite[App. Prop. 4.1]{AM69}). 

(2) can be seen as follows: Since $\otimes \Z/p^r\Z$ is right exact, there is a short exact sequence 
$$0\r \mathcal{K}^M_{d+j,X\mid X_n}/p^r/\mathcal{I} \r \mathcal{K}^M_{d+j,X}/p^r \r \mathcal{K}^M_{d+j,X_n}/p^r \r 0$$
for some sheaf of abelian groups $\mathcal{I}$. This induces an exact sequence
$$H^{d+1}(X,\mathcal{K}^M_{d+j,X\mid X_n}/p^r)\r H^{d+1}(X,\mathcal{K}^M_{d+j,X\mid X_n}/p^r/\mathcal{I}) \r H^{d+2}(X,\mathcal{I}).$$
By \cite[Thm. 3.6.5]{Gr57} the group $H^{d+2}(X,\mathcal{I})$ vanishes for dimensional reasons. The group $H^{d+1}(X,\mathcal{K}^M_{d+j,X\mid X_n}/p^r)$ vanishes by the same arguments as in the integral case or in fact from the surjectivity of the map $H^{d+1}(X,\mathcal{K}^M_{d+j,X\mid X_n})\r H^{d+1}(X,\mathcal{K}^M_{d+j,X\mid X_n}/p^r)$ which also holds for dimensional reasons. Together this implies the vanishing of $H^{d+1}(X,\mathcal{K}^M_{d+j,X\mid X_n}/p^r/\mathcal{I})$. This implies the statement by the same argument as in the proof of (1).
\end{proof}

\begin{corollary}\label{corollaryvanishingidelicH2} 
If $A$ is equi-characteristic, then the map 
$$res: \CH^{d+j}(X,j) \r "\mathrm{lim}_n" H^{d}(X_1,\mathcal{K}^M_{d+j,X_n})$$
is an epimorphism in $\text{pro-}\text{Ab}$ for all $j\geq 0$. If $A$ is of mixed characteristic $(0,p)$ with $p>d+j-1$, then the map 
$$res: \CH^{d+j}(X,j, \Z/p^r) \r "\mathrm{lim}_n" H^{d}(X_1,\mathcal{K}^M_{d+j,X_n}/p^r)$$
is an epimorphism in $\text{pro-}\text{Ab}$ for all $j\geq 0$.
\end{corollary}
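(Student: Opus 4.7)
The plan is to deduce both statements immediately from Corollary \ref{corollaryvanishingidelicH} by means of the Bloch-type identification of higher Chow groups with Milnor K-cohomology. Concretely, as recalled in the introduction, whenever the Gersten conjecture is available for both $\mathcal{K}^M_{d+j,X}$ and the sheaf $\mathcal{CH}^r(-q)$ (for the relevant indices), one obtains an isomorphism $\CH^{d+j}(X,j) \xrightarrow{\cong} H^d(X,\mathcal{K}^M_{d+j,X})$, and analogously with $\Z/p^r$-coefficients. The task thus reduces to verifying that the required Gersten statements are available under the hypotheses of each case.

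For the equi-characteristic case, I would invoke \cite{Ke09, Ke10} to obtain the Gersten conjecture for $\mathcal{K}^M_{d+j,X}$, which applies because the local rings of $X$ are regular and equi-characteristic, together with \cite{Lu17} for the Gersten conjecture for $\mathcal{CH}^r(-q)$ in the equi-characteristic setting. Composing the resulting isomorphism $\CH^{d+j}(X,j) \cong H^d(X,\mathcal{K}^M_{d+j,X})$ with the integral epimorphism provided by Corollary \ref{corollaryvanishingidelicH} yields the first assertion. For the mixed characteristic case with $p>d+j-1$, the numerical hypothesis is calibrated precisely so as to allow application of Proposition \ref{GCMKp} with $n=d+j$ (giving $p>n-1$), yielding Gersten for $\mathcal{K}^M_{d+j,X}/p^r$; the Gersten conjecture for $\mathcal{CH}^r(-q)$ with finite coefficients is unconditional by \cite{Lu17}. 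Together these produce the isomorphism $\CH^{d+j}(X,j,\Z/p^r) \cong H^d(X,\mathcal{K}^M_{d+j,X}/p^r)$, and combining with the mod $p^r$ epimorphism from Corollary \ref{corollaryvanishingidelicH} gives the second assertion.

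Since both cases amount to assembling results already established in the paper or cited from the literature, I anticipate no substantive obstacle. The only bookkeeping to be careful with is matching $n-1=d+j-1<p$ in the application of Proposition \ref{GCMKp}, and ensuring that the identification $\CH^{d+j}(X,j) \cong H^d(X,\mathcal{K}^M_{d+j,X})$ is compatible with reduction mod $p^r$ in the sense needed to align with the short exact sequence argument underlying Corollary \ref{corollaryvanishingidelicH}.
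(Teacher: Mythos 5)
Your proposal is correct and follows essentially the same route as the paper: deduce both assertions from Corollary \ref{corollaryvanishingidelicH} via the identification $\CH^{d+j}(X,j)\cong H^d(X,\mathcal{K}^M_{d+j,X})$ (resp.\ with $\Z/p^r$-coefficients), using Gersten for the Milnor K-sheaf in the equi-characteristic case, Proposition \ref{GCMKp} with $n=d+j$ in the mixed characteristic case, and the Gersten conjecture for higher Chow groups (the paper attributes the latter to Bloch--Panin in equal characteristic and to Geisser with finite coefficients, and separates the case $j=0$, where only the Milnor K-theoretic input is needed). These are only cosmetic differences in citation and bookkeeping, not in substance.
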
 
\begin{proof}
For $j=0$, Corollary \ref{corollaryvanishingidelicH} implies the first assertion since the Gersten conjecture for the sheaf $\mathcal{K}^M_{n,X}$ holds for regular schemes of equal characteristic and the second assertion since the Gersten conjecture holds for $\mathcal{K}^M_{n,X}/p^r$ if $p>n-1$ by Proposition \ref{GCMKp}.

If $j>0$, then the identifications of $\CH^{d+j}(X,j)$ with $H^{d}(X,\mathcal{K}^M_{d+j,X})$ in the equi-characteristic case and $\CH^{d+j}(X,j,\Z/p^r\Z)$ with $H^{d}(X,\mathcal{K}^M_{d+j,X}/p^r)$ in the mixed characteristic case require in addition to the above mentioned results on the Gersten conjecture the Gersten conjecture for higher Chow groups (see \cite[Sec. 2]{Lu17}). This holds if $A$ is equi-characteristic by \cite[Sec. 10]{Bl86} and the method developed by Panin in \cite{Pa03} to extend the Gersten conjecture to the equi-dimensional setting. In mixed characteristic the Gersten conjecture for higher Chow groups with $\Z/p^r\Z$-coefficients holds by \cite[Cor. 4.3]{Ge04}.
\end{proof}

\begin{remark}\label{remarkoninjectivity}
Consider again the short exact sequence
$$0\r \mathcal{K}^M_{d,X\mid X_n} \r \mathcal{K}^M_{d,X} \r \mathcal{K}^M_{d,X_n} \r 0$$
 and the induced long exact sequence 
$$...\r H^{d}(X,\mathcal{K}^M_{d,X\mid X_n})\xrightarrow{i} H^{d}(X,\mathcal{K}^M_{d,X})\xrightarrow{res} H^{d}(X_1,\mathcal{K}^M_{d,X_n})\xrightarrow{0} H^{d+1}(X,\mathcal{K}^M_{d,X\mid X_n})\r...$$
We denote the image of $H^{d}(X,\mathcal{K}^M_{d,X\mid X_n})$ under $i$ by $F_n^X$.

As mentioned in the introduction, Kerz, Esnault and Wittenberg conjecture in \cite[Sec. 10]{KEW16} that if $\text{ch}(\text{Quot}(A))=0$ and $k$ is perfect of characteristic $p>0$ and if we assume that the Gersten conjecture for $\mathcal{K}^M_X$ holds, then the map 
$$res:\CH_1(X)/p^r\rightarrow "\mathrm{lim}_n"H^d(X_n,\mathcal{K}^M_{X_n,d}/p^r)$$ is an isomorphism in pro-Ab.
We note that this conjecture would be implied by the following conjecture:
\begin{conj} $$F_n^X=<g_*F_n^Y|g:Y\r X \text{ projective}, Y/A \text{ smooth projective relative curve}>.$$ \end{conj}
This can be seen as follows: By definition, $F_n^Y\subset H^{1}(Y,\mathcal{K}^M_{1,Y})$ is the image of $H^{1}(Y,\mathcal{K}^M_{1,Y|Y_n})=H^{1}(Y,\mathcal{O}^\times_{Y|Y_n})$ under $i$. By the theorem on formal functions and the $p$-adic logarithm isomorphism, assuming that $p$ is large enough, $H^{1}(Y,\mathcal{O}^\times_{Y|Y_n})\cong H^{1}(Y,p^n\mathcal{O}_Y)$ and therefore the composition
$$H^{1}(Y,\mathcal{O}^\times_{Y|Y_{n+1}})\r H^{1}(Y,\mathcal{O}^\times_{Y|Y_n})$$
is multiplication by $p$.
This implies that $"\mathrm{lim}_n"F_n^Y\otimes \mathbb{Z}/p^r=0$ and therefore that $"\mathrm{lim}_n"F_n^X\otimes \mathbb{Z}/p^r=0$.
\end{remark}

\begin{corollary}\label{conjrel1} Let $k$ be a finite field of characteristic $p>2$ and $A=W(k)$ the Witt ring of $k$. Let $X$ be a smooth projective scheme of relative dimension $1$ over $A$. Then the map $$res:\CH^1(X)/p^r\rightarrow "\mathrm{lim}" H^1(X_1,\mathcal{K}^M_{1,X_n}/p^r)$$
is an isomorphism in the category of pro-systems of abelian groups.
\end{corollary}
\begin{proof}
The injectivity follows from the arguments in Remark \ref{remarkoninjectivity} assuming $p>2$ for the $p$-adic logarithm isomorphism. The surjectivity follows from Corollary \ref{corollaryvanishingidelicH}. 
\end{proof}

\section{Relation with the $p$-adic cycle class map}\label{sectionapplications}
In this section we prove Proposition \ref{propositionrelationmilnorktosyntomic}.

Let $k$ be a finite field of ch$(k)=p>0$, $A=W(k)$ and $X$ be a smooth projective scheme over $A$ of fiber dimension $d$. We let $X_1/k$ denote the reduced special fiber. Let $\tau\in\{\text{Nis, \'et}\}$ and $X_{1,\tau}$ be the respective small site. Let $\epsilon:X_{1,\text{\'et}}\r X_{1,\text{Nis}}$ be the canonical map of sites. 
\begin{definition}(\cite[Def. A.3]{BEK14})  
\begin{enumerate}
\item[(a)] By  $\mathrm{Sh}(X_{1,\tau})$ we denote the category of sheaves of abelian groups on $X_{1,\tau}$. By $\mathrm{C}(X_{1,\tau})$ we denote the category of unbounded complexes in $\mathrm{Sh}(X_{1,\tau})$.
\item[(b)] By $\mathrm{Sh}_{\mathrm{pro}}(X_{1,\tau})$ we denote the category of pro-sytems in $\mathrm{Sh}(X_{1,\tau})$.
\item[(c)] By $\mathrm{C}_{\mathrm{pro}}(X_{1,\tau})$ we denote the category of pro-systems in $\mathrm{C}(X_{1,\tau})$.
\item[(d)] By $\mathrm{D}_{\mathrm{pro}}(X_{1,\tau})$ we denote the Verdier localization of the homotopy category of $\mathrm{C}_{\mathrm{pro}}(X_{1,\tau})$, where we kill objects which are represented by systems of complexes which have level-wise vanishing cohomology sheaves.
\end{enumerate}
\end{definition}

\begin{definition}
We define 
$$W_\cdot \Omega^\bullet_{X_1}\in \mathrm{C}_{\mathrm{pro}}(X_1)_\tau$$
to be the pro-system of de Rham-Witt complexes in the \'etale or Nisnevich topology (see \cite{Il79}).
We define 
$$W_\cdot \Omega^r_{X_1,\mathrm{log}}\in \mathrm{Sh}_{\mathrm{pro}}(X_1)_\tau$$
to be the pro-system of \'etale or Nisnevich subsheaves in $W_r\Omega^j_{X_1}$ which are locally generated by symbols
$$d\log\{[a_1]\}\cdot...\cdot d\log\{[a_j]\}$$
with $a_1,...,a_j\in\O_{X_1}^\times$ local sections and where $[-]$ is the Teichm\"uller lift (see \cite[p. 505, (1.1.7)]{Il79}).
\end{definition}

\begin{definition}
Assuming $j<p$, we define $\mathcal{S}_{r}(j)_{\et}$ to be the syntomic complex defined in \cite[Def. 1.6]{Ka87}. 
We denote the corresponding object in $\mathrm{D}_{\mathrm{pro}}(X_1)_{\et}$ by $\mathcal{S}_{X_.}(j)_{\et}$.
\end{definition}

\begin{definition}(\cite[Sec. 4]{BEK14})
We define $\mathcal{S}_{r}(j)_{\mathrm{Nis}}:=\tau_{\leq j}R\epsilon_*\mathcal{S}_{r}(j)_{\et}$ and $\mathcal{S}_{X_.}(j)_{\mathrm{Nis}}:=\tau_{\leq j}R\epsilon_*\mathcal{S}_{X_.}(j)_{\et}$, where $\tau_{\leq j}$ is the good truncation.
\end{definition}

Let $j<p$. In \cite[Sec. 7]{BEK14}, Bloch, Esnault and Kerz define a motivic pro-complex 
$$\Z_{X_.}(j):=\text{cone}(\mathcal{S}_{X_.}(j)\oplus \Z_{X_1}(j)\r W_.\Omega^j_{X_1,\mathrm{log}}[-j])[-1]$$
in the Nisnevich topology. $\Z_{X_.}(j)$ is an object in $\text{D}_{\text{pro}}(X_{1,\mathrm{Nis}})$ with the following properties:
\begin{proposition}(\cite[Prop. 7.2]{BEK14})\label{propositionBEK7.2}
\begin{enumerate}
\item[(0)] $\Z_{X_.}(0)=\Z$, the constant sheaf in degree $0$.
\item[(1)] $\Z_{X_.}(j)=\mathbb{G}_{m,X_\cdot}[-1]$.
\item[(2)] $\Z_{X_.}(j)$ is supported in degrees $\leq j$ and in $[1,j]$ if $j\geq 1$ and if the Beilinson-Soul\'e conjecture holds. 
\item[(3)] $\Z_{X_.}(j)\otimes_{\Z}^L\Z/p^.= \mathcal{S}_{X_.}(j)_\mathrm{Nis}$ in $\mathrm{D}_{\mathrm{pro}}(X_{1,\mathrm{Nis}})$.  
\item[(4)] $\mathcal{H}^j(\Z_{X_.}(j))=\mathcal{K}^M_{j,X_.}$ in $\mathrm{Sh}_\mathrm{pro}(X_{1,\mathrm{Nis}})$.
\item[(5)] There is a canonical product structure $\Z_{X_.}(j)\otimes^L_\Z \Z_{X_.}(j')\r \Z_{X_.}(j+j')$.
\end{enumerate}
\end{proposition}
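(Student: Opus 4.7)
The plan is to exploit the defining exact triangle
\[
\Z_{X_\cdot}(r)\to \mathcal{S}_{X_\cdot}(r)_{\mathrm{Nis}} \oplus \Z_{X_1}(r)\to W_\cdot\Omega^r_{X_1,\log}[-r]\xrightarrow{+1}
\]
and verify properties (0)--(5) by comparing the three factors via known identifications on each of them. Throughout I would identify each of the three building blocks term by term and let the long exact sequence of cohomology sheaves (or $\otimes^{L}$ applied to the triangle) do the work.

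For (0) and (1) I would compute each factor in the cone directly. For $r=0$, Kato's syntomic complex satisfies $\mathcal{S}_{X_\cdot}(0)\simeq \Z/p^\cdot$, while $\Z_{X_1}(0)=\Z$ and $W_\cdot\Omega^0_{X_1,\log}=\Z/p^\cdot$; the difference-of-reductions map identifies the cone (after the $[-1]$ shift) with $\Z$. For $r=1$ one uses the identification $\mathcal{S}_{X_\cdot}(1)_\et\simeq (\mathbb{G}_{m,X_\cdot}[-1])\otimes^{L}\Z/p^\cdot$, the classical $\Z_{X_1}(1)=\mathbb{G}_{m,X_1}[-1]$ and the $d\log$ description of $W_\cdot\Omega^1_{X_1,\log}$, so that the triangle collapses to $\mathbb{G}_{m,X_\cdot}[-1]$. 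For (2), the syntomic part $\mathcal{S}_{X_\cdot}(r)_\mathrm{Nis}$ lies in degrees $[0,r]$ thanks to $\tau_{\leq r}R\epsilon_{*}$, and $W_\cdot\Omega^r_{\log}[-r]$ is concentrated in degree $r$; hence the cone, shifted by $[-1]$, is supported in degrees $\leq r$, and the refinement to $[1,r]$ under Beilinson-Soul\'e follows from the corresponding vanishing for $\Z_{X_1}(r)$ together with the vanishing of the degree-$0$ piece of $\mathcal{S}_{X_\cdot}(r)$ for $r\geq 1$.

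For (3) I would apply $-\otimes^{L}_{\Z}\Z/p^\cdot$ to the defining triangle. Since $\mathcal{S}_{X_\cdot}(r)$ is already $p^\cdot$-torsion it is unchanged; the identification $\Z_{X_1}(r)\otimes^{L}\Z/p^\cdot\simeq \tau_{\leq r}R\epsilon_{*}\mu_{p^\cdot}^{\otimes r}$ is the Geisser-Levine form of Beilinson-Lichtenbaum for smooth schemes over a finite field, and together with the Bloch-Kato-Gabber identification $\mathcal{H}^r(\tau_{\leq r}R\epsilon_{*}\mu_{p^\cdot}^{\otimes r})=W_\cdot\Omega^r_{X_1,\log}$ and Kato's comparison of $\mathcal{S}(r)_\et$ with $\mu_{p^\cdot}^{\otimes r}$ on $X_1$ this forces the mod-$p^\cdot$ cone to be $\mathcal{S}_{X_\cdot}(r)_\mathrm{Nis}$. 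For (4) I would extract the long exact sequence of cohomology sheaves: in degree $r$ both $\mathcal{H}^r(\mathcal{S}_{X_\cdot}(r))$ and $\mathcal{H}^r(\Z_{X_1}(r))$ surject onto $W_\cdot\Omega^r_{X_1,\log}$, and the kernel on the motivic side is $\mathcal{K}^M_{r,X_1}$ by the Nesterenko-Suslin-Totaro theorem together with the improvement of Kerz; the mod-$p^\cdot$ compatibility from (3) then promotes $\mathcal{K}^M_{r,X_1}$ to the pro-system $\mathcal{K}^M_{r,X_\cdot}$. Finally (5) is built componentwise from Bloch's product on $\Z_{X_1}(r)$, Kato's product on $\mathcal{S}(r)$, and the graded-ring structure on $W_\cdot\Omega^\cdot_{X_1,\log}$; one checks that these products are compatible with the two defining maps into $W_\cdot\Omega^r_{\log}[-r]$, and then uses the universal property of the cone to assemble the pairing.

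The hardest step will be (3). It requires the full force of Geisser-Levine together with Kato's structure theorems relating $\mathcal{S}_\cdot(r)_\et$ to the fundamental triangle involving $\mu_{p^\cdot}^{\otimes r}$ and the logarithmic Hodge-Witt sheaves, and it is precisely this compatibility which makes (4) coherent across the whole pro-system $X_\cdot$ rather than only on the special fiber $X_1$.
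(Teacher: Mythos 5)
The first thing to say is that the paper contains no proof of this proposition: it is imported verbatim from \cite[Prop.~7.2]{BEK14}, so the only ``proof'' in the paper is the citation. Your strategy --- unwinding the defining triangle $\Z_{X_\cdot}(r)\r \mathcal{S}_{X_\cdot}(r)_{\mathrm{Nis}}\oplus\Z_{X_1}(r)\r W_\cdot\Omega^r_{X_1,\mathrm{log}}[-r]$ and identifying the three constituents --- is indeed the route taken in \emph{loc.\ cit.}, and your sketches of (0), (1), (2) and (5) are essentially fine (for (2) note additionally that support in degrees $\leq r$, not $\leq r+1$, uses surjectivity in top degree of $\mathcal{H}^r(\Z_{X_1}(r))=\mathcal{K}^M_{r,X_1}\r W_\cdot\Omega^r_{X_1,\mathrm{log}}$ via $d\log$).

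There are, however, two genuine problems. In (3), the identification $\Z_{X_1}(r)\otimes^L\Z/p^\cdot\simeq\tau_{\leq r}R\epsilon_*\mu_{p^\cdot}^{\otimes r}$ that you invoke does not exist: $X_1$ has characteristic $p$, where $\mu_{p^n}$ is trivial as an \'etale sheaf, and Beilinson--Lichtenbaum in this $\mu$-form is meaningless with $p$-coefficients; likewise Kato's comparison for $\mathcal{S}(r)_{\et}$ concerns $i^*Rj_*\mu_{p^n}^{\otimes r}$ coming from the generic fibre, not a sheaf on $X_1$. The input that actually makes the mod-$p^\cdot$ cone collapse onto $\mathcal{S}_{X_\cdot}(r)_{\mathrm{Nis}}$ is the characteristic-$p$ Geisser--Levine theorem in the form $\Z_{X_1}(r)\otimes^L\Z/p^\cdot\simeq W_\cdot\Omega^r_{X_1,\mathrm{log}}[-r]$ in the Nisnevich topology, which cancels the third term of the triangle; and the step ``$\mathcal{S}_{X_\cdot}(r)$ is $p$-torsion, hence unchanged'' needs a separate argument in $\mathrm{D}_{\mathrm{pro}}(X_{1,\mathrm{Nis}})$, since $\otimes^L\Z/p^\cdot$ produces levelwise Tor-terms that must be shown to vanish pro-systematically. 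In (4), the long exact sequence of cohomology sheaves only exhibits $\mathcal{H}^r(\Z_{X_\cdot}(r))$ as the fibre product of $\mathcal{H}^r(\mathcal{S}_{X_\cdot}(r)_{\mathrm{Nis}})$ and $\mathcal{K}^M_{r,X_1}$ over $W_\cdot\Omega^r_{X_1,\mathrm{log}}$; to identify this with the pro-sheaf $\mathcal{K}^M_{r,X_\cdot}$ of the thickenings you need the substantive syntomic--Milnor K comparison $\mathcal{H}^r(\mathcal{S}_{X_\cdot}(r)_{\mathrm{Nis}})\cong\mathcal{K}^M_{r,X_\cdot}/p^\cdot$ (a Kato--Kurihara-type theorem, proved in \cite{BEK14}) together with control of the relative part $\ker(\mathcal{K}^M_{r,X_\cdot}\r\mathcal{K}^M_{r,X_1})$ via the $p$-adic exponential; your phrase about ``promoting'' $\mathcal{K}^M_{r,X_1}$ to $\mathcal{K}^M_{r,X_\cdot}$ by the mod-$p^\cdot$ compatibility skips precisely this input, which is the hardest point of the whole proposition.
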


We now start the proof of Proposition \ref{propositionrelationmilnorktosyntomic} proving the following lemmas:
\begin{lemma}\label{lemmamilnorksynnis}
Let $j<p$. Then the map
$$H^{d}_{\mathrm{Nis}}(X_1,\mathcal{K}^M_{j,X_\cdot}\otimes \Z/p^\cdot)\r H^{d+j}_{\mathrm{Nis}}(X_1,\mathcal{S}_{X_\cdot}(j))$$
is an isomorphism in the category of pro-abelian groups.
\end{lemma}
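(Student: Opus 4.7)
The plan is to deduce the lemma from the structural properties of the motivic pro-complex $\Z_{X_\cdot}(j)$ of \cite{BEK14} recalled in Proposition \ref{propositionBEK7.2}, combined with a standard hypercohomology spectral sequence argument on the special fiber.

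First, I would use Proposition \ref{propositionBEK7.2}(3) at the $r$-th level of the pro-system to obtain
$$\mathcal{S}_r(j)_{\mathrm{Nis}}\;\simeq\;\Z_{X_\cdot}(j)\otimes^L_{\Z}\Z/p^r\qquad\text{in }\mathrm{D}_{\mathrm{pro}}(X_{1,\mathrm{Nis}}).$$
By part (2) of the same proposition, $\Z_{X_\cdot}(j)$ is supported in cohomological degrees $\leq j$ (the upper bound is unconditional). The distinguished triangle
$$\Z_{X_\cdot}(j)\xrightarrow{p^r}\Z_{X_\cdot}(j)\r\mathcal{S}_r(j)_{\mathrm{Nis}}\r\Z_{X_\cdot}(j)[1]$$
then forces $\mathcal{S}_r(j)_{\mathrm{Nis}}$ to be supported in degrees $\leq j$ as well, with top cohomology sheaf
$$\mathcal{H}^{j}\bigl(\mathcal{S}_r(j)_{\mathrm{Nis}}\bigr)\;=\;\mathcal{H}^{j}(\Z_{X_\cdot}(j))/p^r\;=\;\mathcal{K}^M_{j,X_\cdot}/p^r$$
by part (4) of the proposition.

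Next, I would feed this into the hypercohomology spectral sequence
$$E_2^{p,q}\;=\;H^p_{\mathrm{Nis}}(X_1,\mathcal{H}^q(\mathcal{S}_r(j)_{\mathrm{Nis}}))\;\Longrightarrow\;H^{p+q}_{\mathrm{Nis}}(X_1,\mathcal{S}_r(j)_{\mathrm{Nis}}).$$
Because $X_1$ has Nisnevich cohomological dimension at most $d$ and the complex is concentrated in degrees $\leq j$, the only term on the anti-diagonal $p+q=d+j$ which can be non-zero is $E_2^{d,j}=H^d_{\mathrm{Nis}}(X_1,\mathcal{K}^M_{j,X_\cdot}/p^r)$. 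The incoming differential $E_r^{d-r,j+r-1}\to E_r^{d,j}$ vanishes because its source has $q=j+r-1>j$, and the outgoing differential $E_r^{d,j}\to E_r^{d+r,j-r+1}$ vanishes because its target has $p=d+r>d$; hence $E_2^{d,j}=E_\infty^{d,j}$ and the edge map
$$H^d_{\mathrm{Nis}}(X_1,\mathcal{K}^M_{j,X_\cdot}/p^r)\xrightarrow{\;\sim\;} H^{d+j}_{\mathrm{Nis}}(X_1,\mathcal{S}_r(j)_{\mathrm{Nis}})$$
is an isomorphism level-wise in the pro-system, hence in $\text{pro-}\text{Ab}$.

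Finally, to match the right hand side of the lemma, I would note that since $A=W(k)$ one has $\pi=p$, so $X_n\otimes_A\Z/p^r\cong X_r$ for every $n\geq r$; the pro-system $\mathcal{S}_r(j)_{\mathrm{Nis}}$ in $n$ is therefore essentially constant with value Kato's $\mathcal{S}_r(j)_{\mathrm{Nis}}$, giving exactly the claimed isomorphism. The only place that requires real care is the first step, where one must verify that the identification of \cite{BEK14} and the two cohomological bounds all pass through the pro-structure; I expect this to be a bookkeeping exercise using that $\mathrm{lim}$ in pro-categories is exact, rather than a genuine obstacle. The essential mathematical input is simply the two dimensional vanishings in the hypercohomology spectral sequence.
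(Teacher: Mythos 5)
Your argument is essentially the paper's own proof: both identify the top cohomology sheaf $\mathcal{H}^j(\mathcal{S}_r(j))$ with the pro-sheaf $\mathcal{K}^M_{j,X_\cdot}/p^r$ via Proposition \ref{propositionBEK7.2}(2)--(4), and then conclude with the hypercohomology spectral sequence, the Nisnevich cohomological dimension bound $d$, and the concentration of $\mathcal{S}_r(j)$ in degrees $\leq j$. Your closing remark about $X_n\otimes_A\Z/p^r$ is superfluous (Kato's $\mathcal{S}_r(j)$ is a single complex on $X_1$, constant in $n$ by definition), but otherwise the proof coincides with the paper's.
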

\begin{proof}
By properties (2)-(4) of Proposition \ref{propositionBEK7.2} we have that $\mathcal{K}^M_{j,X_{\cdot}}\otimes \Z/p^\cdot\cong \mathcal{H}^j(\mathcal{S}_\cdot(j)_{\mathrm{Nis}})$. Let us be more precise:
$$\mathcal{K}^M_{j,X_{\cdot}}\otimes \Z/p^\cdot\stackrel{(4)}{\cong} \mathcal{H}^j(\Z_{X_.}(j))\otimes \Z/p^\cdot \stackrel{(2)}{\cong} \mathcal{H}^j(\Z_{X_.}(j)\otimes^\mathbb{L} \Z/p^\cdot) \stackrel{(3)}{\cong} \mathcal{H}^j(\mathcal{S}_\cdot(j)_{\mathrm{Nis}}).$$
For the isomorphism in the middle consider the short exact sequence 
$$0\r \mathcal{H}^j(\Z_{X_.}(j))\otimes \Z/p^\cdot \r \mathcal{H}^j(\Z_{X_.}(j)\otimes^\mathbb{L} \Z/p^\cdot)\r \mathcal{H}^{j+1}(\Z_{X_.}(j))[p^\cdot]\r 0 $$
in which the term on the right vanishes by $(2)$.
This implies that $$H^{d}_{\mathrm{Nis}}(X_1,\mathcal{K}^M_{j,X_\cdot}\otimes \Z/p^\cdot)\cong H^d_{\mathrm{Nis}}(X_1,\mathcal{H}^j(\mathcal{S}_\cdot(j))).$$ The hypercohomology spectral sequence 
$$E_2^{pq}= H^p_{\mathrm{Nis}}(X_1,\mathcal{H}^q(\mathcal{S}_r(j)))\Rightarrow \mathbb{H}^{p+q}_{\mathrm{Nis}}(X_1,\mathcal{S}_r(j))$$
together with the Nisnevich cohomological dimension of $X_1$ and the concentration of $\mathcal{S}_r(j)_{\mathrm{Nis}}$ in degrees $\leq j$ implies that $H^{d}_{\mathrm{Nis}}(X_1,\mathcal{H}^j(\mathcal{S}_r(j)))\cong H^{d+j}_{\mathrm{Nis}}(X_1,\mathcal{S}_r(j))$ and therefore that $H^{d}_{\mathrm{Nis}}(X_1,\mathcal{K}^M_{j,X_\cdot}\otimes \Z/p^\cdot)\r H^{d+j}_{\mathrm{Nis}}(X_1,\mathcal{S}_\cdot(j))$.
\end{proof}

\begin{lemma}\label{derhamwittetalnis}
The natural map
$$\rho_{X_\Nis}^{d,q}: H_{\mathrm{Nis}}^{2d-q}(X_1,W_r\Omega_{X_1,\mathrm{log}}^d[-d])\r H_{\et}^{2d-q}(X_1,W_r\Omega_{X_1,\mathrm{log}}^d[-d])$$
is an isomorphism for $q\in \{0,1\}$. 
\end{lemma}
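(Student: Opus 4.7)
My approach is to prove the stronger statement that the natural map
$$W_n\Omega^d_{X_1,\log,\mathrm{Nis}}\r R\epsilon_* W_n\Omega^d_{X_1,\log,\et}$$
is a quasi-isomorphism in $D(X_{1,\mathrm{Nis}})$. Via the Leray spectral sequence
$$E_2^{p,q}=H^p_\mathrm{Nis}(X_1,R^q\epsilon_* W_n\Omega^d_{X_1,\log})\Rightarrow H^{p+q}_\et(X_1,W_n\Omega^d_{X_1,\log}),$$
this would yield $H^i_\mathrm{Nis}(X_1,W_n\Omega^d_{X_1,\log})\xrightarrow{\sim}H^i_\et(X_1,W_n\Omega^d_{X_1,\log})$ in every degree $i$, which, after accounting for the shift $[-d]$, covers the two cases $q=0,1$ asserted in the lemma. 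The task thus reduces to showing $R^i\epsilon_* W_n\Omega^d_{X_1,\log}=0$ for all $i\geq 1$.

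The first step is to invoke Illusie's short exact sequence on $X_{1,\et}$,
$$0\r W_n\Omega^d_{X_1,\log}\r W_n\Omega^d_{X_1}\xrightarrow{1-F}W_n\Omega^d_{X_1}/dV^{n-1}\Omega^{d-1}_{X_1}\r 0.$$
The two non-logarithmic terms are $W_n\O_{X_1}$-quasi-coherent, so their higher direct images under $\epsilon$ vanish by the standard comparison of \'etale and Nisnevich cohomology of quasi-coherent sheaves on a noetherian scheme. The long exact sequence for $R\epsilon_*$ then immediately gives $R^i\epsilon_* W_n\Omega^d_{X_1,\log}=0$ for $i\geq 2$ and identifies $R^1\epsilon_* W_n\Omega^d_{X_1,\log}$ with the cokernel, as a Nisnevich sheaf, of $1-F$.

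The main obstacle is to show that this cokernel vanishes, i.e., that $1-F$ is surjective in the Nisnevich topology. I would argue this stalk-wise at a henselian local ring $R=\O^h_{X_1,x}$: given $[\omega]\in W_n\Omega^d(R)/dV^{n-1}\Omega^{d-1}(R)$, one solves the equation $(1-F)\eta=\omega$ by an Artin--Schreier--Witt-type Hensel lemma argument, possibly after passing to a Nisnevich refinement of $R$. An alternative route, more in line with the machinery of the paper and making use of the hypothesis $p>d$, is to combine the Bloch--Kato--Gabber identification $W_n\Omega^d_{X_1,\log}\cong\mathcal{K}^M_{d,X_1,\et}/p^n$ with the Gersten conjecture for $\mathcal{K}^M_d/p^n$ established in Proposition~\ref{GCMKp}. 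In this approach both the Nisnevich and the \'etale Gersten resolutions of $\mathcal{K}^M_{d,X_1}/p^n$ have the same skyscraper-type terms $i_{x,*}K^M_{d-i}(k(x))/p^n$ supported on codimension-$i$ points, and a termwise analysis of $R\epsilon_*$ reduces the vanishing of $R^{\geq 1}\epsilon_*$ to standard Galois cohomology computations on the residue fields $k(x)$. Once the vanishing of $R^1\epsilon_*$ is in hand, the rest of the argument is purely formal.
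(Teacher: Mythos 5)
Your plan hinges on the claim that $R^i\epsilon_*W_n\Omega^d_{X_1,\mathrm{log}}=0$ for all $i\geq 1$, which would make the change-of-topology map an isomorphism in \emph{every} degree. Both this intermediate claim and that conclusion are false. For the conclusion: $X_1$ is smooth projective over a finite field, so $H^{d+1}_{\et}(X_1,W_n\Omega^d_{X_1,\mathrm{log}})\neq 0$ (it is $\Z/p^n$ for $X_1$ geometrically connected, by duality over the finite base field), while $H^{d+1}_{\mathrm{Nis}}(X_1,W_n\Omega^d_{X_1,\mathrm{log}})=0$ because the Nisnevich cohomological dimension is $d$; hence the comparison map cannot be an isomorphism in all degrees and any argument proving that must break somewhere. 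Where it breaks is exactly your key step: the stalk of $R^1\epsilon_*W_n\Omega^d_{X_1,\mathrm{log}}$ at $x$ is $H^1_{\et}(\O^h_{X_1,x},W_n\Omega^d_{\mathrm{log}})$, and this does not vanish. Already for $d=0$ it is the Galois cohomology group $H^1(k(x),\Z/p^n)\neq 0$ for $k(x)$ finite, and at the generic point it is Kato's group $H^{d+1}(k(X_1),\Z/p^n(d))$, which is nonzero for a function field of transcendence degree $d$ over a finite field. The Artin--Schreier--Witt/Hensel step fails because $1-F$ only becomes surjective after residue field extensions, i.e.\ after \'etale covers that are not completely decomposed, so it is not Nisnevich-locally surjective; your alternative route via the two Gersten resolutions fails for the same reason, since the ``standard Galois cohomology computations on the residue fields'' do not give vanishing --- those nonvanishing groups are precisely the terms of Kato's complex.

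The lemma is in fact a global, not a local, statement: the discrepancy between Nisnevich and \'etale cohomology of $W_n\Omega^d_{X_1,\mathrm{log}}$ in degrees near $d$ is measured by Kato homology. The paper's proof identifies $H^{2d-q}_{\mathrm{Nis}}(X_1,W_n\Omega^d_{X_1,\mathrm{log}}[-d])$ for $q=0,1$ with the higher Chow groups $\CH^d(X_1,q;\Z/p^n\Z)$, using the coniveau spectral sequence, Nesterenko--Suslin, Bloch--Gabber--Kato and the Gersten resolution of Gros--Suwa, and then feeds this into the long exact sequence of Jannsen--Saito relating these groups to $H^{2d-q}_{\et}$ and Kato homology, concluding by their theorem that $KH^0_i(X_1,\Z/p^n\Z)=0$ for $1\leq i\leq 3$ (Kato's conjecture for smooth projective varieties over finite fields). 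Some arithmetic input of this kind is unavoidable; a purely local vanishing of $R^{\geq 1}\epsilon_*$, as you propose, cannot hold.
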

\begin{proof}
Let $KH^0_a(X_1,\Z/p^r\Z)$ denote the so called Kato homology groups, i.e. the homology in degree $a$ of the complex $C_{p^r}^0$ defined in \cite{Ka86}.
By \cite[Lem. 6.2]{JS} (see also \cite[Sec. 9]{KeS12}) there is a long exact sequence
\begin{equation*}
\begin{split}
...\r KH^{0}_{q+2}(X_1,\Z/p^r\Z)\r \CH^d(X_1,q;\Z/p^r\Z)\xrightarrow{\rho_{X_\Zar}^{d,q}} H_{\text{\'et}}^{2d-q}(X_1,\Z/p^r\Z(d))\r \\
KH^{0}_{q+1}(X_1,\Z/p^r\Z)\r \CH^d(X_1,q-1;\Z/p^r\Z)\r H_{\text{\'et}}^{2d-q+1}(X_1,\Z/p^r\Z(d)) \r ...
\end{split} 
\end{equation*}
where $\Z/p^r\Z(d)=W_r\Omega^d_{X_1,\mathrm{log}}[-d]$. We first identify the group $\CH^d(X_1,q;\Z/p^r\Z)$ with $H_{\text{Nis}}^{2d-q}(X_1,W_r\Omega^d_{X_1,\mathrm{log}}[-d])$ for $q=0,1$.
Consider the spectral sequence 
$$ ^{\CH}E^{p,q}_1(X_1)=\oplus_{x\in X_1^{(p)}}\CH^{d-p}(\text{Spec}k(x),-p-q,\Z/p^r\Z)\Rightarrow \CH^d(X_1,-p-q,\Z/p^r\Z)$$
from \cite[Sec. 10]{Bl86} and note that 
$$\CH^{a}(\text{Spec}k(x),a,\Z/p^r\Z)\cong K^M_a(k(x))/p^r\cong W_r\Omega^a_{k(x),\text{log}}$$
for all $a\geq 0$. The first isomorphism follows from \cite[Thm. 4.9]{NS89} (see also \cite{To92}) and the fact that $\CH^{a}(\text{Spec}k(x),a,\Z/p^r\Z)\cong \CH^{a}(\text{Spec}k(x),a)\otimes \Z/p^r\Z$. The second isomorphism follows from the Bloch-Gabber-Kato theorem (see \cite{BK86}). This implies the identification since $\CH^0(k(x),1)=0$ and since
$$\bigoplus_{x\in X^0} i_{x*}W_r\Omega^d_{k(x),\text{log}}\r \bigoplus_{x\in X^1} i_{x*}W_r\Omega^{d-1}_{k(x),\text{log}}\r ...\r \bigoplus_{x\in X^d} i_{x*}W_r\Omega^{0}_{k(x),\text{log}}$$
is a (Gersten-)resolution for the sheaf $W_r\Omega^d_{X_1,\mathrm{log}}$ considered in the Zariski topology (see \cite{GS88}) and therefore also in the Nisnevich topology. In particular $H_{\mathrm{Zar}}^{i}(X_1,W_r\Omega_{X_1,\mathrm{log}}^d)\cong H_{\mathrm{Nis}}^{i}(X_1,W_r\Omega_{X_1,\mathrm{log}}^d)$ for all $i\geq 0$. Note furthermore that $\rho_{X_\Zar}^{d,q}$ factors through $\rho_{X_\Nis}^{d,q}$ since it comes from the change of sites $\epsilon:X_\et\r X_\Nis\r X_\Zar$. In fact, Nisnevich and Zariski motivic cohomology coincide.

Now the Kato homology groups $KH^{0}_{i}(X_1,\Z/p^r\Z)$ vanishes for $1\leq i \leq 4$ by \cite[Thm. 0.3]{JS} (see also \cite[Thm. 8.1]{KeS12}) which implies the lemma.
\end{proof}

\begin{lemma}\label{lemmasyntomicnisetale} Let $j<p$. Then
$$H^{j+d}_{\mathrm{Nis}}(X_1,\mathcal{S}_{X_\cdot}(j))\r H^{j+d}_{\et}(X_1,\mathcal{S}_{X_\cdot}(j))$$
is an isomorphism for all $j\geq d$.
\end{lemma}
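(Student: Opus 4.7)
\medskip

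The strategy is to compute both $H^{j+d}_{\mathrm{Nis}}(X_1,\mathcal{S}_r(j))$ and $H^{j+d}_{\et}(X_1,\mathcal{S}_r(j))$ explicitly via hypercohomology spectral sequences, and then reduce the comparison to Lemma \ref{derhamwittetalnis}. Recall the definition $\mathcal{S}_r(j)_{\mathrm{Nis}}=\tau_{\leq j}R\epsilon_\ast \mathcal{S}_r(j)_{\et}$, so the canonical comparison map $H^\ast_{\mathrm{Nis}}(X_1,\mathcal{S}_r(j))\to H^\ast_{\et}(X_1,\mathcal{S}_r(j))$ is induced by the natural morphism $\mathcal{S}_r(j)_{\mathrm{Nis}}\to R\epsilon_\ast \mathcal{S}_r(j)_{\et}$ in $\mathrm{D}^+(X_{1,\mathrm{Nis}})$ whose cone is $\tau_{>j}R\epsilon_\ast\mathcal{S}_r(j)_{\et}$.

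\smallskip

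On the Nisnevich side, exactly as in the proof of Lemma \ref{lemmamilnorksynnis}, the hypercohomology spectral sequence
$$E_2^{p,q}=H^p_{\mathrm{Nis}}(X_1,\mathcal{H}^q(\mathcal{S}_r(j)_{\mathrm{Nis}}))\Rightarrow \mathbb{H}^{p+q}_{\mathrm{Nis}}(X_1,\mathcal{S}_r(j))$$
combined with the Nisnevich cohomological dimension bound $p\leq d$ and the truncation $\mathcal{H}^q=0$ for $q>j$ forces only the term $(p,q)=(d,j)$ to contribute in total degree $j+d$, giving
$$H^{j+d}_{\mathrm{Nis}}(X_1,\mathcal{S}_r(j))\cong H^d_{\mathrm{Nis}}(X_1,\mathcal{H}^j(\mathcal{S}_r(j)_{\mathrm{Nis}})).$$
For $j<p$ and $X_1$ smooth, the top cohomology sheaf of Kato's complex is $\mathcal{H}^j(\mathcal{S}_r(j)_{\et})\cong W_r\Omega^j_{X_1,\log}$ (cf.~\cite{Ka87,Sa07}), so by Proposition \ref{propositionBEK7.2}(3)(4) we identify this target with $H^d_{\mathrm{Nis}}(X_1,W_r\Omega^j_{X_1,\log})$.

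\smallskip

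On the étale side, I would run the analogous spectral sequence
$$E_2^{p,q}=H^p_{\et}(X_1,\mathcal{H}^q(\mathcal{S}_r(j)_{\et}))\Rightarrow \mathbb{H}^{p+q}_{\et}(X_1,\mathcal{S}_r(j)).$$
Again $\mathcal{H}^q=0$ for $q>j$. The cohomology sheaves $\mathcal{H}^q(\mathcal{S}_r(j)_{\et})$ with $q<j$ are built (in the $j<p$ range) from quasi-coherent pieces $\Omega^{q-1}_{X_1}$ and related de Rham sheaves, hence have vanishing cohomology above degree $d$; this forces them out of total degree $j+d$. Only the top term $H^d_{\et}(X_1,W_r\Omega^j_{X_1,\log})$ survives, so
$$H^{j+d}_{\et}(X_1,\mathcal{S}_r(j))\cong H^d_{\et}(X_1,W_r\Omega^j_{X_1,\log}).$$

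\smallskip

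Having reduced both sides to cohomology of $W_r\Omega^j_{X_1,\log}$, the hypothesis $j\geq d$ finishes the argument. If $j>d$ then $W_r\Omega^j_{X_1,\log}=0$ since $\dim X_1=d$, and both groups vanish. If $j=d$, Lemma \ref{derhamwittetalnis} yields
$$H^d_{\mathrm{Nis}}(X_1,W_r\Omega^d_{X_1,\log})\xrightarrow{\sim} H^d_{\et}(X_1,W_r\Omega^d_{X_1,\log}),$$
and tracing through the identifications shows this is the comparison map under consideration. The main obstacle is the careful bookkeeping on the étale side, namely verifying that the contributions from $\mathcal{H}^q(\mathcal{S}_r(j)_{\et})$ for $q<j$ indeed die in total degree $j+d$; this rests on the quasi-coherent nature of those lower cohomology sheaves (for $j<p$) together with the dimension bound on $X_1$, and on Lemma \ref{derhamwittetalnis} to pin down the top piece.
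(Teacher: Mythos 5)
Your reduction of both sides to $H^d(X_1,W_r\Omega^j_{X_1,\mathrm{log}})$ is where the argument breaks down, and the intermediate claims are false rather than merely unproved. On the Nisnevich side, Proposition \ref{propositionBEK7.2}(2)--(4) identifies $\mathcal{H}^j(\mathcal{S}_r(j)_{\mathrm{Nis}})$ with $\mathcal{K}^M_{j,X_\cdot}/p^r$, a pro-sheaf on the thickenings $X_n$, not with $W_r\Omega^j_{X_1,\mathrm{log}}$; the relative parts $\mathrm{ker}(\mathcal{K}^M_{j,X_n}/p^r\r \mathcal{K}^M_{j,X_1}/p^r)$, built from differential forms as in Remark \ref{zarnismotivicthickenings}, do not disappear. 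On the \'etale side the sheaves $\mathcal{H}^q(\mathcal{S}_r(j)_{\et})$ for $q\leq j$ are $p$-adic vanishing-cycle type sheaves: besides coherent graded pieces they contain $W_r\Omega^q_{\mathrm{log}}$- and $W_r\Omega^{q-1}_{\mathrm{log}}$-constituents, and the \'etale $p$-cohomological dimension of $X_1$ over a finite field is $d+1$, not $d$ (for instance $H^{d+1}_{\et}(X_1,W_r\Omega^d_{X_1,\mathrm{log}})\cong \Z/p^r$ by duality). Hence the terms $H^{d+1}_{\et}(X_1,\mathcal{H}^{j-1})$ and the non-log part of $H^{d}_{\et}(X_1,\mathcal{H}^{j})$ in total degree $d+j$ are not eliminated by your spectral sequence. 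Concretely, your conclusion that both groups vanish for $j>d$ is wrong: already for $X=\Spec(W(k))$, so $d=0$, and $j=1$, Proposition \ref{propositionBEK7.2}(1),(3) gives $\mathcal{S}_r(1)\simeq \mathbb{G}_{m,X_\cdot}\otimes^{\mathbb{L}}\Z/p^r[-1]$, so $H^{1}(X_1,\mathcal{S}_r(1))$ is the nonzero pro-system $(W_n(k)^\times/p^r)_n$ in both topologies, while $W_r\Omega^1_{k,\mathrm{log}}=k^\times/p^r=0$. The lemma asserts only that the comparison map is an isomorphism, not that either group equals $H^d(X_1,W_r\Omega^j_{X_1,\mathrm{log}})$.

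The paper's proof does not compute either side. It uses the exact triangle $p(j)\Omega^{\leq j}_{X_\cdot}[-1]\r \mathcal{S}_{X_\cdot}(j)\r W_\cdot\Omega^j_{X_1,\mathrm{log}}[-j]$ of \cite[Thm. 5.4]{BEK14} in both topologies (the Nisnevich triangle being obtained by $\tau_{\leq j}R\epsilon_*$), compares the two long exact cohomology sequences, and applies the five lemma: the comparison maps for the coherent complex $p(j)\Omega^{\leq j}_{X_\cdot}[-1]$ are isomorphisms because Nisnevich and \'etale cohomology agree for coherent sheaves (this is where the contributions you tried to discard live --- they are matched on the two sides, not shown to vanish), and the maps for the log part are isomorphisms by Lemma \ref{derhamwittetalnis} in the two degrees $d-1$ and $d$ (the cases $q=0,1$) together with the vanishing of $W\Omega^j_{\mathrm{log}}$ for $j>d$. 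Your argument invokes Lemma \ref{derhamwittetalnis} only in degree $d$; the fact that the degree $d-1$ case is indispensable in the actual proof is a further sign that the terms you dropped genuinely contribute.
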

\begin{proof}
By \cite[Thm 5.4]{BEK14} we have an exact triangle
$$p(j)\Omega_{X_\cdot}^{\leq j}[-1]\r S_{X_\cdot}(j)_{\text{Nis}}\r W_{\cdot}\Omega^j_{X_1,\text{log}}[-j]\xrightarrow{[1]}..$$  
in $\text{D}_{\text{pro}}(X_1)_{\text{Nis}}$
which comes from the exact triangle
$$p(j)\Omega_{X_\cdot}^{\leq j}[-1]\r S_{X_\cdot}(j)_{\text{\'et}}\r W_{\cdot}\Omega^j_{X_1,\text{log}}[-j]\xrightarrow{[1]}..$$  
in $\text{D}_{\text{pro}}(X_1)_{\text{\'et}}$ by applying the functor $\tau_{\leq j}\circ R\epsilon_*$. This induces the following commutative diagram with exact rows:
$$\begin{xy} 
  \xymatrix{
   H^{d+j-1}_{\text{Nis}}(X_1,W_{\cdot}\Omega^j_{X_1,\text{log}}[-j]) \ar[r]^{} \ar[d]_{\alpha} &  H^{d+j}_{\text{Nis}}(X_1,p(j)\Omega_{X_\cdot}^{\leq j}[-1]) \ar[d]^{\beta} \ar[r]^{} \ar[d]_{} &  H^{d+j}_{\text{Nis}}(X_1,S_{X_\cdot}(j)_{\text{Nis}}) \ar[d]^{} \\
    H^{d+j-1}_{\text{\'et}}(X_1,W_{\cdot}\Omega^j_{X_1,\text{log}}[-j]) \ar[r]^{}    &  H^{d+j}_{\text{\'et}}(X_1,p(j)\Omega_{X_\cdot}^{\leq j}[-1]) \ar[r]^{}  & H^{d+j}_{\text{\'et}}(X_1,S_{X_\cdot}(j)_{\text{\'et}})
  }
\end{xy} $$

$$\begin{xy} 
  \xymatrix{
  \ar[r]^{} & H^{d+j}_{\text{Nis}}(X_1,W_{\cdot}\Omega^j_{X_1,\text{log}}[-j]) \ar[r]^{} \ar[d]_{\gamma} &  H^{d+j+1}_{\text{Nis}}(X_1,p(j)\Omega_{X_\cdot}^{\leq j}[-1]) \ar[d]^{\delta} \ar[d]_{}  \\
   \ar[r]^{} & H^{d+j}_{\text{\'et}}(X_1,W_{\cdot}\Omega^j_{X_1,\text{log}}[-j]) \ar[r]^{}    &  H^{d+j+1}_{\text{\'et}}(X_1,p(j)\Omega_{X_\cdot}^{\leq j}[-1])  
  }
\end{xy} $$
Now $\alpha$ and $\gamma$ are isomorphisms by Lemma \ref{derhamwittetalnis} and the fact that $W_{\cdot}\Omega^j_{X_1,\text{log}}[-j]=0$ for $j>d$. The maps $\beta$ and $\delta$ are isomorphisms since $p(j)\Omega_{X_\cdot}^{\leq j}[-1]$ is a complex of coherent sheaves. The result follows by the five-lemma.
\end{proof}

\begin{proposition}\label{propositionidentificationmilnorksyntomic} Let $p>j$ and $j\geq d$. Then the map
$$"\mathrm{lim}_n" H^{d}_{\mathrm{Nis}}(X_1,\mathcal{K}^M_{j,X_n}/p^r)\r H^{d+j}_{\et}(X_1,\mathcal{S}_r(j)) $$
is an isomorphism of pro-abelian groups. 
\end{proposition}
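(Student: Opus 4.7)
The proof should be essentially a two-step composition using the two lemmas that have just been established. My plan is as follows.

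First, I would invoke Lemma \ref{lemmamilnorksynnis}, whose hypothesis $j<p$ is exactly our assumption $p>j$, to obtain a canonical isomorphism of pro-abelian groups
$$"\mathrm{lim}_n" H^{d}_{\mathrm{Nis}}(X_1,\mathcal{K}^M_{j,X_n}/p^r)\xrightarrow{\sim} H^{d+j}_{\mathrm{Nis}}(X_1,\mathcal{S}_r(j)),$$
where the right-hand side is regarded as a constant pro-system. This step uses only the identification $\mathcal{K}^M_{j,X_\cdot}/p^r \cong \mathcal{H}^j(\mathcal{S}_r(j))$ from Proposition \ref{propositionBEK7.2}(2)-(4) together with the hypercohomology spectral sequence and the Nisnevich cohomological dimension bound, so nothing new is required here.

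Second, I would compose with the comparison map $\epsilon^*\colon H^{d+j}_{\mathrm{Nis}}(X_1,\mathcal{S}_r(j))\to H^{d+j}_{\et}(X_1,\mathcal{S}_r(j))$ and appeal to Lemma \ref{lemmasyntomicnisetale}, whose hypotheses are precisely $j<p$ and $j\geq d$, to conclude that this comparison map is an isomorphism. The naturality of the comparison map $R\epsilon_*$ ensures that the composite is indeed the map in the statement of the proposition (recall that $\mathcal{S}_r(j)_{\mathrm{Nis}}$ was defined as $\tau_{\leq j}R\epsilon_*\mathcal{S}_r(j)_{\et}$, and our range $j+d\leq 2j$ sits inside the good truncation).

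There is no substantial obstacle remaining, as both isomorphisms have been proved above; the content of the proposition is exactly that one can splice them. The only point worth a brief remark is the verification that the map written in the proposition genuinely coincides with the composite of the two isomorphisms of the lemmas, which is a formal consequence of the definition of $\mathcal{S}_r(j)_{\mathrm{Nis}}$ via $R\epsilon_*$ and of the functoriality of the edge maps in the hypercohomology spectral sequence. Once this identification is made, the proposition follows by composing the two isomorphisms.
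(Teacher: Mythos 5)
Your proposal matches the paper's proof, which simply combines Lemma \ref{lemmamilnorksynnis} and Lemma \ref{lemmasyntomicnisetale}; your extra remark on identifying the composite with the map in the statement is a harmless elaboration of the same argument.
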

\begin{proof}
It follows from Lemma \ref{lemmamilnorksynnis} and Lemma \ref{lemmasyntomicnisetale} that
$$H^{d}_{\mathrm{Nis}}(X_1,\mathcal{K}^M_{j,X_\cdot}\otimes \Z/p^\cdot)\r H^{d+j}_{\et}(X_1,\mathcal{S}_\cdot(j)).$$
Tensoring with $\Z/p^r$ gives the desired result: Since $\otimes\Z/p^r$ is right exact and the cohomology group on the left is taken in the top degree, it follows that $"\mathrm{lim}_n" H^{d}_{\mathrm{Nis}}(X_1,\mathcal{K}^M_{j,X_n}/p^r)\cong H^{d}_{\mathrm{Nis}}(X_1,\mathcal{K}^M_{j,X_\cdot}\otimes \Z/p^\cdot)\otimes\Z/p^r$. For the right side note first that $\mathcal{S}_\cdot(j)_{\et}\otimes \Z/p^r \cong \mathcal{S}_r(j)_{\et}$. This follows for example from \cite[Thm. 1.3]{Ge04}. Now consider the short exact sequence 
$$0\r H^{d+j}_{\et}(X_1,\mathcal{S}_{X_\cdot}(j))/p^r\r H^{d+j}_{\et}(X_1,\mathcal{S}_{X_\cdot}(j)\otimes^\mathbb{L} \Z/p^r)\r H^{d+j+1}_{\et}(X_1,\mathcal{S}_{X_\cdot}(j))[p^r]\r 0$$
and note that $H^{d+j+1}_{\et}(X_1,\mathcal{S}_{X_\cdot}(j))[p^r]=0$. For $j\geq d+1$ this is clear since in that case $H^{d+j+1}_{\et}(X_1,\mathcal{S}_{X_\cdot}(j))=0$. For $j=d$ we have that
$$H^{2d+1}_{\et}(X_1,\mathcal{S}_{X_\cdot}(d))\cong H^{2d+1}_{\et}(X_1,W_{\cdot}\Omega^d_{X_1,\text{log}}[-d])\cong KH^0_0(X_1,\Z/p^\cdot\Z)\cong \Z/p^\cdot\Z.$$
Here the first isomorphism follows from the diagram in the proof of Lemma \ref{lemmasyntomicnisetale}, the second from the long exact sequence in the proof of Lemma \ref{derhamwittetalnis} extended to the right and the third from the Kato conjectures (see \cite[Thm. 8.1]{KeS12}). But $\Z/p^\cdot\Z[p^r]=0$ since for every $\Z/p^m\Z[p^r]=0$ we can find an $m'$ (f.e. $m'=m+r$) such that $\Z/p^{m'}\Z[p^r]\r \Z/p^m\Z[p^r]$ is the zero map.
\end{proof}

\begin{proposition/definition}\label{diagrammintroduction2proof}
Diagram (\ref{diagrammintroduction2}) of the introduction commutes.
\end{proposition/definition}
\begin{proof} Let $X_K$ be the generic fiber of $X$. Let $i:X_1\hookrightarrow X$ and $j:X_K\hookrightarrow X$ be the canonical inclusions.

The exact Kummer sequence 
$$0\r \mu_{p^n}\r \O_{X_K}^\times\xrightarrow{p^n} \O_{X_K}^\times\r 0$$
on $X_{K,\et}$ induces an exact sequence 
$$j_*\O_{X_K}^\times\xrightarrow{p^n}j_*\O_{X_K}^\times\r R^1j_*\mu_{p^n}\r 0$$
on $X_\et$ which induces a Galois symbol map 
$$j_*\mathcal{K}^M_{q,X_K}\r R^qj_*(\Z/p^r\Z(q))$$
(see \cite[(1.2)]{BK86}).
This map induces a map  
$$\mathcal{K}^M_{q,X}\r i_*\text{ker}(\sigma^q_{X,r}:i^*R^qj_*(\Z/p^r\Z(q))\r W_r\Omega^{q-1}_{X_1,\log})\cong \H^q(\mathcal{T}_r(q))$$
in the \'etale topology.
For the definition of $\sigma^q_{X,r}$ see \cite[Sec. 3.2]{Sa07} and for the isomorphism on the right see \cite[Def. 4.2.4]{Sa07}. Furthermore, $\text{ker}(\sigma^q_{X,r}:i^*R^qj_*(\Z/p^r\Z(q))\r W_r\Omega^{q-1}_{X_1,\log})\cong \H^q(\mathcal{S}_r(q))$ by \cite{Ka87}. We have the following commutative diagram in the \'etale topology:
$$\begin{xy}
  \xymatrix{
      \mathcal{K}^M_{q,X}/p^r \ar[d]^-{} \ar[r]  & i_*\mathcal{K}^M_{q,X_{r+1}}/p^r \ar[d]^-{(*)} \\
  \H^q(\mathcal{T}_r(q))   \ar[r]^-{\cong} &i_*\H^q(\mathcal{S}_r(q))
  }
\end{xy} $$
Here $(*)$ is induced by Kato's syntomic regulator map (\cite[Sec. 3]{Ka87}) and the commutativity follows from \cite[Lem. 4.2]{Ka87}.

Taking cohomology groups we get the following commutative diagram:

$$\begin{xy}
  \xymatrix{
      H^d_\Nis(X, \mathcal{K}^M_{q,X}/p^r) \ar[d]^-{} \ar[r]  & H^d_\Nis(X_1, \mathcal{K}^M_{q,X_{r+1}}/p^r) \ar[d]^-{(*)} \\
  H^d_\Nis(X, \H^q(\mathcal{T}_r(q)_\Nis))  \ar[d]^-{\cong}  \ar[r]^-{\cong} & H^d_\Nis(X_1,\H^q(\mathcal{S}_r(q)_\Nis)) \ar[d]^-{\cong}\\
  H^{d+q}_\Nis(X, \mathcal{T}_r(q)_\Nis)   \ar[d] \ar[r]^-{\cong} & H^{d+q}_\Nis(X_1,\mathcal{S}_r(q)_\Nis) \ar[d]\\
  H^{d+q}_\et(X, \mathcal{T}_r(q))    \ar[r]^-{\cong} & H^{d+q}_\et(X_1,\mathcal{S}_r(q)),
  }
\end{xy} $$
where $\mathcal{T}_r(q)_\Nis:= \tau_{\leq q}R\epsilon_*\mathcal{T}_r(q)_\et$. The lower horizontal isomorphism follows from proper base change and the fact that $i^*\mathcal{T}_r(n)\cong \mathcal{S}_r(n)$ if $p>n+1$. For the isomorphism on the right see the proof of Proposition \ref{lemmamilnorksynnis}. The change of sites $\epsilon:X_\Nis\r X_\Zar$ now gives the result.
\end{proof}

As a corollary we get the following result:
\begin{corollary}\label{corollarysurjectivitytate}
Let $j+1<p$. Then the cycle class map 
$$\varrho^{j,j-d}_{p^r}:\CH^{j}(X,j-d,\Z/p^r\Z)\r H^{d+j}_{\et}(X,\mathcal{T}_r(j))$$
is surjective for all $j\geq d$.
\end{corollary}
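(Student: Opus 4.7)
The plan is to assemble the chain of maps that constitutes the left half of diagram~(\ref{diagrammintroduction2}), by combining Corollary~\ref{corollaryvanishingidelicH2} with Proposition~\ref{propositionidentificationmilnorksyntomic} and proper base change for the $p$-adic \'etale Tate twist $\mathcal{T}_r$. Reindex by $i := j - d \geq 0$, so that the assertion becomes surjectivity of
$$\varrho^{d+i,i}_{p^r}:\CH^{d+i}(X,i)/p^r\r H^{2d+i}_{\et}(X,\mathcal{T}_r(d+i))$$
under the hypothesis $p > d+i+1$ (which is exactly $j+1<p$).

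First, since $p > d+i+1 > (d+i)-1$ and $k$ is finite, Corollary~\ref{corollaryvanishingidelicH2} applies to give an epimorphism in $\text{pro-}\text{Ab}$
$$\CH^{d+i}(X,i)/p^r\twoheadrightarrow"\mathrm{lim}_n"H^{d}(X_1,\mathcal{K}^M_{d+i,X_n}/p^r),$$
where the left-hand side is viewed as a constant pro-system. By Remark~\ref{zarnismotivicthickenings}, applicable because $A=W(k)$ and $p>2$ (the case $p=2$ forces $j=d=0$ and is trivial), the Zariski pro-system on the right is identified with its Nisnevich counterpart. Proposition~\ref{propositionidentificationmilnorksyntomic}, applicable since $d+i\geq d$ and $p>d+i$, then identifies the latter with the constant pro-abelian group $H^{2d+i}_{\et}(X_1,\mathcal{S}_r(d+i))$.

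Next, since $p>d+i+1$, Sato's result recalled in the introduction gives $i^{*}\mathcal{T}_r(d+i)\cong \mathcal{S}_r(d+i)$, and proper base change for \'etale $\Z/p^r\Z$-complexes (valid because $X$ is proper over the henselian local ring $A$) provides an isomorphism
$$H^{2d+i}_{\et}(X,\mathcal{T}_r(d+i))\xrightarrow{\sim}H^{2d+i}_{\et}(X_1,\mathcal{S}_r(d+i)).$$
Composing, we obtain a surjection of abelian groups (an epi of constant pro-systems is a surjection in $\text{Ab}$)
$$\CH^{d+i}(X,i)/p^r\twoheadrightarrow H^{2d+i}_{\et}(X,\mathcal{T}_r(d+i)).$$

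The only remaining point, and the main obstacle to a fully clean argument, is the compatibility asserted by diagram~(\ref{diagrammintroduction2}): that the composite above coincides with $\varrho^{d+i,i}_{p^r}$. This rests on the fact that $\varrho_{p^r}$ is induced by the product-compatible morphism of motivic pro-complexes $\Z_{X_{\cdot}}(d+i)\otimes^{\mathbb{L}}\Z/p^r\Z\r\mathcal{T}_r(d+i)$, while the identifications $\mathcal{H}^{d+i}(\mathcal{S}_r(d+i))\cong\mathcal{K}^M_{d+i,X_\cdot}/p^r$ from Proposition~\ref{propositionBEK7.2}(3),(4) used in the proof of Proposition~\ref{propositionidentificationmilnorksyntomic} are exactly the same ones underlying $\varrho$. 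Once this coherence is granted -- as it was already invoked in setting up diagram~(\ref{diagrammintroduction2}) -- surjectivity of $\varrho^{d+i,i}_{p^r}$ is immediate.
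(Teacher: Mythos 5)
Your proposal is correct and follows essentially the same route as the paper's own proof: Corollary~\ref{corollaryvanishingidelicH2} together with Remark~\ref{zarnismotivicthickenings} gives the surjection onto $"\mathrm{lim}_n"H^{d}_{\mathrm{Nis}}(X_1,\mathcal{K}^M_{j,X_n}/p^r)$, Proposition~\ref{propositionidentificationmilnorksyntomic} identifies this with $H^{d+j}_{\et}(X_1,\mathcal{S}_r(j))$, the comparison $i^*\mathcal{T}_r(j)\cong\mathcal{S}_r(j)$ for $j+1<p$ (plus proper base change, which the paper subsumes in the citation to Sato) identifies it with $H^{d+j}_{\et}(X,\mathcal{T}_r(j))$, and the conclusion rests on the commutativity of diagram~(\ref{diagrammintroduction2}), exactly as in the paper. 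Your honest flagging of that compatibility as the remaining point matches the paper, which likewise invokes it without further proof.
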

\begin{proof}
By Corollary \ref{corollaryvanishingidelicH2} and Remark \ref{zarnismotivicthickenings} the map $$res:\CH^{j}(X,j-d,\Z/p^r\Z)\xrightarrow{} "\text{lim}_n"H^{d}_\mathrm{Zar}(X_1,\mathcal{K}^M_{j,X_n}/p^r)\cong H^{d}_\mathrm{Nis}(X_1,\mathcal{K}^M_{j,X_n}/p^r)$$ is surjective for $j-1<p$. By Proposition \ref{propositionidentificationmilnorksyntomic} we have that $"\text{lim}_n" H^{d}_\mathrm{Nis}(X_1,\mathcal{K}^M_{j,X_n}/p^r)\cong H^{d+j}_{\text{\'et}}(X_1,\mathcal{S}_r(j))$ for $j<p$. Furthermore, for $j+1<p$ we have that $H^{d+j}_{\text{\'et}}(X_1,\mathcal{S}_r(j))\cong H^{d+j}_{\text{\'et}}(X,\mathcal{T}_r(j))$ (see \cite[Sec. 1.4]{Sa07}). The result now follows from the commutativity of (\ref{diagrammintroduction2}).
\end{proof}

\begin{remark} As we noted in the introduction, Saito and Sato show in \cite{SS14} that the the cycle class map
$$\varrho_{p^r}^{d,0}:\CH^d(X)/p^r\r H^{2d}_{\mathrm{\et}}(X,\mathcal{T}_r(d))$$
defined in \cite[Cor. 6.1.4]{Sa07} is surjectiv for $X$ a regular scheme which is proper, flat, of finite type and which has semistable reduction over $\O_K$, where $\O_K$ is the ring of integers in a $p$-adic local field $K$. We expect that this map coincides with the map defined in Proposition \ref{diagrammintroduction2proof}. 
\end{remark} 
 
Finally we note the following injectivity result for curves:
\begin{proposition}\label{propositioninjectivityreldim1}
Let $X$ be smooth projective of relative dimension $1$ over a $p$-adic local ring $A$. Then the cycle class map 
$$\varrho^{1,0}_{p^r}:\CH_{1}(X)/p^r\r H^{2}_{\et}(X,\mathcal{T}_r(1))$$
is injective.
\end{proposition}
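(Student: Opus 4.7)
My plan is to realize the cycle class map $\varrho^{1,0}_{p^r}$ as the boundary map of a Kummer-type distinguished triangle, after which injectivity becomes a routine consequence of standard Brauer group considerations. Since $X$ has relative dimension $1$ over $A$, we have $\CH_1(X) = \CH^1(X) = \Pic(X)$, so the task reduces to showing that the induced map $\Pic(X)/p^r \to H^2_{\et}(X, \mathcal{T}_r(1))$ is injective.

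The first step is to invoke Sato's identification $\mathcal{T}_r(1) \simeq \mathbb{G}_m[-1] \otimes^{\mathbb{L}}_{\Z} \Z/p^r\Z$ in $D^b(X_{\et}, \Z/p^r\Z)$, valid for the regular scheme $X$ (see \cite{Sa07}). This yields a distinguished triangle
\begin{equation*}
\mathbb{G}_m[-1] \xrightarrow{p^r} \mathbb{G}_m[-1] \longrightarrow \mathcal{T}_r(1) \longrightarrow \mathbb{G}_m,
\end{equation*}
whose associated long exact sequence in \'etale hypercohomology, combined with the standard identifications $H^1_{\et}(X, \mathbb{G}_m) = \Pic(X)$ and $H^2_{\et}(X, \mathbb{G}_m) = \mathrm{Br}(X)$, gives the short exact sequence
\begin{equation*}
0 \longrightarrow \Pic(X)/p^r \xrightarrow{\partial} H^2_{\et}(X, \mathcal{T}_r(1)) \longrightarrow \mathrm{Br}(X)[p^r] \longrightarrow 0.
\end{equation*}
A standard compatibility---the cycle class of a prime divisor $D \subset X$ equals $\partial([\mathcal{O}(D)])$---identifies $\varrho^{1,0}_{p^r}$ with the boundary map $\partial$, so injectivity is immediate.

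The main obstacle is the first step: verifying that $\mathcal{T}_r(1)$ really is the derived Kummer complex on the mixed-characteristic regular scheme $X$. The naive Kummer sequence $0 \to \mu_{p^r} \to \mathbb{G}_m \xrightarrow{p^r} \mathbb{G}_m \to 0$ fails to be \'etale-locally exact on $X$ because $p$ is not invertible, so this identification is not automatic and depends on Sato's explicit construction of $\mathcal{T}_r(1)$ by glueing $\mathcal{S}_r(1)$ on $X_1$ with $\mu_{p^r}$ on $X[1/p]$. One should also verify the compatibility of the cycle class map with the Kummer boundary, though this is a routine check once the Kummer description is granted; note that none of the idelic input from the earlier sections of the paper (the algebraization theorem, Milnor K-theoretic arguments) enters the present proof.
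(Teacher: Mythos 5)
Your argument is correct, but it follows a genuinely different route from the paper. The paper's proof is a one-line application of the coniveau spectral sequence $E_1^{u,v}=\oplus_{x\in X^u}H^{u+v}_x(X,\mathcal{T}_r(1))\Rightarrow H^{u+v}(X,\mathcal{T}_r(1))$: absolute cohomological purity together with Sato's purity property of $p$-adic \'etale Tate twists identifies $E_2^{1,1}$ with $\CH_1(X)/p^r$, and since $\dim X=2$ no nonzero differentials can touch $E_2^{1,1}$, so it equals $E_\infty^{1,1}=F^1H^2/F^2H^2$ and the cycle class map, which induces this identification, is injective. You instead exploit the special feature of twist one, namely Sato's identification $\mathcal{T}_r(1)\simeq \mathbb{G}_m\otimes^{\mathbb{L}}\Z/p^r\Z[-1]$, whose associated long exact sequence yields $0\r \Pic(X)/p^r\r H^2_{\et}(X,\mathcal{T}_r(1))\r \mathrm{Br}(X)[p^r]\r 0$, plus the compatibility of $\varrho^{1,0}_{p^r}$ with the boundary map under $\CH_1(X)=\CH^1(X)\cong\Pic(X)$. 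Both inputs are available: the Kummer-type description of $\mathcal{T}_r(1)$ and the agreement of the cycle class in codimension one with the divisor class map are part of Sato's framework in \cite{Sa07} (and are used in just this way by Saito and Sato), so the compatibility you defer is indeed a documented fact rather than a gap, though in a polished write-up you should cite it rather than call it routine. The trade-off: your argument is specific to codimension one but gives more, namely an exact sequence computing the cokernel as $\mathrm{Br}(X)[p^r]$; the paper's spectral-sequence argument is the one that interfaces uniformly with the purity formalism used elsewhere in the paper and does not need the $\mathbb{G}_m$-description, but it only yields injectivity. Note also that purity has not disappeared from your proof --- it is hidden both in Sato's identification of $\mathcal{T}_r(1)$ and in the divisor-class compatibility --- so the two arguments are closer in substance than they first appear.
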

\begin{proof}
This follows immediately from the spectral sequence 
$$E_1^{u,v}=\oplus_{x\in X^u}H^{v+u}_{\et,x}(X,\mathcal{T}_r(d))\Rightarrow H^{v+u}_{\et}(X,\mathcal{T}_r(d))$$
since by absolute cohomological purity and the purity property of $p$-adic \'etale Tate twists $E_2^{1,1}\cong \CH_{1}(X)/p^r$ (see \cite{Sa05}).
\end{proof}

Under the assumptions of Corollary \ref{conjrel1} we therefore get a sequence of isomorphisms
$$H^{2}_{\et}(X,\mathcal{T}_r(1)) \xleftarrow{\cong} \CH_{1}(X)/p^r\xrightarrow{\cong} "\mathrm{lim}_n"H^1(X_1,\mathcal{K}^M_{X_n,1}/p^r).$$
The isomorphism on the left, induced by $\varrho^{1,0}_{p^r}$, follows from Proposition \ref{propositioninjectivityreldim1} and the theorem of Saito and Sato mentioned in the introduction and the isomorphism on the right follows from Corollary \ref{conjrel1}. It would be interesting to have a similar result for $\CH^2(X,1,\Z/p^r\Z)$.

\bibliographystyle{siam}
\bibliography{Bibliografie} 

\end{document}